\newcommand{\dint}{\displaystyle\int}
\newcommand\redout{\bgroup\markoverwith
{\textcolor{red}{\rule[0.5ex]{2pt}{0.8pt}}}\ULon}
\theoremstyle{plain}
\newtheorem{theorem}{Theorem}[section]
\newtheorem{hy}{Assumption}[section]
\newtheorem{corollary}[theorem]{Corollary}
\newtheorem{lemma}[theorem]{Lemma}
\newtheorem{proposition}[theorem]{Proposition}
\theoremstyle{definition}
\newtheorem{definition}[theorem]{Definition}
\theoremstyle{remark}
\newtheorem{remark}[theorem]{Remark}
\numberwithin{equation}{section}
\numberwithin{theorem}{section}
\begin{document}

\title{Bridges with random length: \\ Gamma case}

\author{\textbf{Mohamed Erraoui}\\
	Universit{\'e} Cadi Ayyad, Facult{\'e} des Sciences Semlalia,\\
	D{\'e}partement de Math{\'e}matiques, B.P. 2390, Marrakech, Maroc\\
	Email: erraoui@uca.ac.ma 
	\and
	\textbf{Astrid Hilbert}\\
		School of Computer Science, Physics and Mathematics,\\
		Linnaeus University, Vejdesplats 7, SE-351 95 V\"axj\"o, Sweden.\\
		Email: astrid.hilbert@lnu.se 
		\and 
\textbf{Mohammed Louriki}\\
Universit{\'e} Cadi Ayyad, Facult{\'e} des Sciences Semlalia,\\
 D{\'e}partement de Math{\'e}matiques, B.P. 2390, Marrakech, Maroc\\
 Email: louriki.1992@gmail.com}

\maketitle
\begin{abstract}
	The aim objective of this paper is to show that certain basic properties of  gamma bridges with deterministic length stay true also for gamma bridges with random length. Among them the Markov property as well as the canonical decomposition with respect to the usual augmentation of its natural filtration, which leads us to conclude that its completed natural filtration is right continuous.
\end{abstract}

\smallskip
\noindent 
\textbf{Keywords:} Lévy processes, Gamma processes, Gamma bridges, Markov Process, Bayes Theorem.
\\
\\
\\
\\

\begin{center}
\section{Introduction}
\label{Setion_1}
\end{center}
The gamma process has proven very successful when modelling accumulation processes. Early studies by e.g. Hammersley \cite{H}, Moran \cite{M}, Gani \cite{G}, Kendall \cite{K}, Kingman \cite{Ki} addressed the modelling of water stored in and released from reservoirs and accumulation related to storage in general. Dufresne et al. \cite{DGS} show how to employ the gamma process to model liabilities of insurance portfolios for continuous claims. For these risk models the fixed budget horizon of one year is assumed. The authors investigate furthermore the gamma process in the setting of ruin theory and supply ruin probabilities in form of tables. The gamma process replaces the compound processes used traditionally. In Em\'ery and Yor \cite{EY} and Yor \cite{Y} gamma bridges were studied and their application to stop loss reinsurance and credit risk management were pointed out in Brody et al \cite{BHM}. This work introduces and focusses on random gamma bridges, which model accumulated losses of large credit portfolios in credit risk management. These studies were continued by Hoyle et al. in a series of papers, see e.g. Hoyle and Meng\"ut\"urk \cite{HM}. In a further article pricing at an intermediate time is studied Hoyle et al \cite{HHM}. Returning to the starting point accumulation processes for storage, we refer to recent developments in Chan et al \cite{CTT} for further references. Numerical results may be found in Assmusen and Hobolth \cite{AH}.\\
In this paper we generalize the concept of a gamma bridge to random times, at which the bridge is pinned, to study amongst others its Markov property and to give its decomposition semi-martingale. There are two recent works in which bridges with random length are studied. The first by Bedini et al \cite{BBE} studies related properties of the Brownian bridge with random length, the second by Erraoui and Louriki \cite{EL} studies Gaussian bridges with random length. In both works existence of an explicit expression for the bridge with random length is exploited. Applications for the random gamma bridge suggest itself for accumulation processes in financial mathematics, see the results of Jeanblanc and Le Cam in \cite{JC2010} and \cite{JC2009}, if heavier tails are desired, electricity production in river power plants with implications to electricity prices, life insurance portfolios, where the instant of death is random, accumulation storage in the setting of just in time production.\\
The paper is organized as follows. Section 2 begins by recalling the definitions and some properties of gamma processes and
gamma bridges of deterministic length, which will be used throughout the paper. In Section 3 we define the gamma bridge with random time $\tau$ which will be denoted by $\zeta$ and we consider the stopping time property of $\tau$ with respect to the right continuous and completed filtration $\mathbb{F}^{\zeta,c}_+$  generated by the process $\zeta$. Moreover, we give the conditional distribution of $\tau$ and $\zeta_u$ given $\zeta_{t}$ for $u>t>0$. Next we establish the Markov property of the process $\zeta$ with respect to its completed natural filtration.  As a consequence, we derive Bayesian estimates for the distribution of the default time $\tau$, given the past behaviour of the process $\zeta$ up to time $t$. After that we study the Markov property of the gamma bridge with random length, with respect to $\mathbb{F}^{\zeta,c}_+$. Finally we give its semimartingale decomposition with respect to $\mathbb{F}^{\zeta,c}_+$.\\
The following notation will be used throughout the paper: 
For a complete probability space $(\Omega,\mathcal{F},\mathbb{P})$, $\mathcal{N}_p$ denotes the
collection of $\mathbb{P}$-null sets. If $\theta$ is a random variable, then $\mathbb{P}_{\theta}$
denotes the law of $\theta$ under $\mathbb{P}$. $\mathcal{D}$ denotes the space of right continuous functions with
left limits (càdlàg) from $\mathbb{R}_{+}$ to $\mathbb{R}_{+}$,
endowed with Skorohod's topology, under which the space
$\mathcal{D}$ is a Pollish space. If $E$ is a topological space, then the Borel $\sigma$-algebra over $E$ will be denoted by $\mathcal{B}(E)$. The characteristic function of a set $A$ is written $\mathbb{I}_{A}$. The symmetric difference of two sets $A$ and $B$ is denoted by $A\Delta B$. 
Finally for any process $Y=(Y_t,\, t\geq 0)$ on $(\Omega,\mathcal{F},\mathbb{P})$, we define by:
\begin{enumerate}
	\item[(i)] $\mathbb{F}^{Y}=\bigg(\mathcal{F}^{Y}_t:=\sigma(Y_s, s\leq t),~ t\geq 0\bigg)$ the natural filtration of the process $Y$.
	\item[(ii)] $\mathbb{F}^{Y,c}=\bigg(\mathcal{F}^{Y,c}_t:=\mathcal{F}^{Y}_t\vee \mathcal{N}_{P},\, t\geq 0\bigg)$ the completed natural filtration of the process $Y$.
	\item[(iii)] $\mathbb{F}^{Y,c}_{+}=\bigg(\mathcal{F}^{Y,c}_{t^{+}}:=\underset{{s>t}}\bigcap\mathcal{F}^{Y,c}_{s}=\mathcal{F}^{Y}_{t^{+}}\vee \mathcal{N}_{P},\, t\geq 0\bigg)$ the smallest filtration containing $\mathbb{F}^{Y}$ and satisfying the usual hypotheses of right-continuity and completeness.
\end{enumerate}
\begin{center}
	\section{Gamma and Gamma bridge processes}
\end{center}
The purpose of this section is to  recall the definition and some properties of the  standard gamma process and the gamma bridge with deterministic length.\\
\subsection{Gamma process}
       By a standard gamma process $(\gamma_{t},t\geq0)$ on $(\Omega,\mathcal{F},\mathbb{P})$,
we mean a subordinator without drift having the Lévy-Khintchine representation
given by

\begin{eqnarray}
\mathbb{E}(\exp(-\lambda\gamma_{t}))  &= & \exp\left(-t\int_{0}^{\infty}(1-\exp(-\lambda x))\,\dfrac{\exp(-x)}{x}dx\right) \label{eq:levy1}
\\ \nonumber
\\
 & = & (1+\lambda)^{-t}, \label{eq:levy2}
\end{eqnarray}
where $\nu(dx)=\dfrac{\exp(-x)}{x}\,\mathbb{I}_{\left(0,\infty\right)}(x)\,dx$
is the so-called Lévy measure. We note that the formula \eqref{eq:levy2} is obtained
from \eqref{eq:levy1} using the Frullani formula.

The following properties, inferred from \eqref{eq:levy2} by means
of standard arguments (see, e.g., Sato \cite{SA}, ch. 2 and 4), describe the paths of the gamma process.
\begin{proposition}
The gamma process $(\gamma_{t},t\geq0)$ has the following properties: 
\end{proposition}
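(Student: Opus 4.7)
The plan is to derive the path properties entirely from the Lévy--Khintchine representation \eqref{eq:levy1}--\eqref{eq:levy2}, invoking the general theory of subordinators as developed in Sato \cite{SA}. Since the representation has no Gaussian part and no drift, and the Lévy measure is supported on $(0,\infty)$, the process $(\gamma_t)_{t\ge 0}$ is a pure-jump subordinator. From this alone I would read off: $\gamma_0=0$ almost surely, independent and stationary increments, non-decreasing càdlàg trajectories, and the fact that $\gamma$ is a Lévy process (hence Markov with respect to its natural filtration).

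Next I would identify the one-dimensional marginal. The Laplace transform \eqref{eq:levy2}, namely $\mathbb{E}[\exp(-\lambda\gamma_t)]=(1+\lambda)^{-t}$, is exactly the Laplace transform of the Gamma$(t,1)$ distribution, so for each $t>0$ the random variable $\gamma_t$ has density $f_t(x)=\frac{x^{t-1}e^{-x}}{\Gamma(t)}\mathbb{I}_{(0,\infty)}(x)$. Combined with stationarity of the increments, this yields $\gamma_{s+t}-\gamma_s\sim$ Gamma$(t,1)$, independent of $\mathcal{F}^{\gamma}_s$. I would also verify the integrability of the Lévy measure against $x$: $\int_0^\infty x\,\nu(dx)=\int_0^\infty e^{-x}\,dx=1<\infty$, confirming that the paths are of finite variation and can be written as the sum of their jumps, which is consistent with the absence of a drift term in \eqref{eq:levy1}.

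I would then address the finer pathwise features. The total mass of $\nu$ is $\int_0^\infty\frac{e^{-x}}{x}\,dx=\infty$, so the jump measure has infinite activity: in every non-degenerate interval the process makes infinitely many (necessarily positive) jumps almost surely. Since there is no drift, all of the increase of $\gamma$ comes from these jumps; coupled with infinite activity, this forces the paths to be strictly increasing almost surely. Continuity in probability (immediate for any Lévy process) together with the càdlàg regularisation gives the standard path properties, and finally $\gamma_t\to\infty$ almost surely as $t\to\infty$ follows from the strong law applied to the i.i.d.\ increments $\gamma_{n+1}-\gamma_n\sim$ Gamma$(1,1)$.

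The arguments are essentially bookkeeping rather than difficult analysis, so I do not expect a serious obstacle; the only delicate point is the strict monotonicity, since one must rule out the possibility of constant stretches between jumps. The cleanest way is to note that if $\gamma$ were constant on some interval with positive probability, then by stationarity and independence of increments there would exist $\varepsilon>0$ with $\mathbb{P}(\gamma_\varepsilon=0)>0$, contradicting the explicit Gamma$(\varepsilon,1)$ density, which has no atom at $0$. This observation, together with the absence of drift and the infinite-activity jump structure, closes the argument.
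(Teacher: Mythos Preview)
Your proposal is correct and aligns with the paper's approach: the paper does not give a detailed proof either, stating only that the properties are ``inferred from \eqref{eq:levy2} by means of standard arguments (see, e.g., Sato \cite{SA}, ch.~2 and 4)'' and adding the remarks that infinite activity follows from $\nu(\mathbb{R}_+)=+\infty$ and finite variation from $\int_0^1 x\,\nu(dx)<+\infty$. You have simply fleshed out those standard arguments, and your contradiction argument for strict monotonicity (ruling out $\mathbb{P}(\gamma_\varepsilon=0)>0$ via the explicit density) is a clean way to handle the one point the paper leaves implicit.
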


\begin{description}
\item[(i)] $\gamma$ is a purely jump process;
\item[(ii)] $\gamma$ is not a compound Poisson process and its jumping times
are countable and dense in $[0,\infty)$ a.s.; 
\item[(iii)] the map $t\mapsto\gamma_{t}$ is strictly increasing and not continuous
anywhere a.s.; 
\item[(iv)] $\gamma$ has sample paths of finite variation a.s.; 
\item[(v)] $\gamma_{t},t>0$, follows a gamma distribution with density
\begin{equation}
f_{\gamma_{t}}(x)=\dfrac{x^{t-1}\exp(-x)}{\Gamma(t)}\mathbb{I}_{\left(0,\infty\right)}(x),\label{equationgammadensity}
\end{equation}
 where $\Gamma$ is the gamma function. 
\end{description}
The second property means that, for any $t>0$, $\gamma$ has infinite
activity, that is, almost all paths have infinitely many jumps along
any time interval of finite length. It is a direct consequence of
$\nu(\mathbb{R}_{+})=+\infty$, whereas the fourth property arises from $\dint_{0}^{1}x\,\nu(dx)<+\infty$.

\begin{remark}
\begin{enumerate}

\item[1.] It is clear that the process gamma
$(\gamma_{t},t\geq0)$ is a process with paths in $\mathcal{D}$.

\item[2.] The process $(\gamma_{t}-\gamma_{t^{-}}:=e_{t},t\geq0)$ of jumps
of the gamma process $(\gamma_{t},t\geq0)$ is a Poisson point process
whose intensity measure is the Lévy measure of $(\gamma_{t},t\geq0)$,
see Bertoin \cite{B}. For $r>0$, let us denote by $\left(J_{1}^{r}\geq J_{2}^{r}\geq\ldots\right)$
the sequel of the lengths of jumps of the process $(\gamma_{t},t\in [0,r])$
ranked in decreasing order. It is not difficult to see that since
the intensity measure of the Poisson point process $\left((t,e_{t}),t\geq0)\right)$
is $dt\;\dfrac{\exp(-x)}{x}\,\mathbb{I}_{\left(0,\infty\right)}(x)\,dx$,
then the jump times $\left(U_{1}^{r},U_{2}^{r},\ldots\right)$ constitute
a sequence of i.i.d r.v.\textquoteright s with uniform law on $[0,r]$
which is independent from the sequence $\left(J_{k}^{r},k\geq1\right)$. Thus we have the following representation:
\begin{equation}
\gamma_{t}=\underset{k\geq1}{\sum}\,J_{k}^{r}\,\mathbb{I}_{\{ U_{k}^{r}\leq t\}},\quad t\in[0,r].\label{gammarep}
\end{equation}
We note that : $\gamma_{r}=\underset{k\geq1}{\sum}\,J_{k}^{r}$.
\end{enumerate}
\end{remark}

The next proposition gives three other useful properties of of the gamma process.
\begin{proposition}
\label{propstrongpropertiesofgamma}  
\begin{description}
\item[(i)] For every $r>0$, the $\sigma$-algebras, $\sigma\big(\dfrac{\gamma_{u}}{\gamma_{r}},u\in[0,r]\big)$
and $\sigma(\gamma_{u},u\in[r,\infty))$ are independent. 
\item[(ii)] For any $r>0$, $(\gamma_{t},0\leq t\leq r)$ satisfies the following
equation
\begin{equation}
\gamma_{t}=M_{t}^{r}+\int_{0}^{t}\dfrac{\gamma_{r}-\gamma_{s}}{r-s}ds,\label{gammarepmart}
\end{equation}
 where $(M_{t}^{r},t\in[0,r])$ is a $\mathcal{G}_{t}^{(r)}$-martingale
with $\mathcal{G}_{t}^{(r)}=\sigma(\gamma_{s},s\in[0,t]\cup\{r\})$. 
\item[(iii)] $(\gamma_{t},t\geq0)$ has the Markov property with respect to its natural filtration.
\end{description}
\end{proposition}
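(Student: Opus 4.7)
Part (iii) will be immediate: $\gamma$ is a L\'evy process, hence has independent increments, and any process with independent increments enjoys the Markov property with respect to its own natural filtration. I would dispatch this in one sentence once (i) and (ii) are established.

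For part (i), the idea is the classical \emph{beta--gamma algebra}. Fix $0<u_1<\cdots<u_n\leq r<v_1<\cdots<v_m$. By the L\'evy property, the increments $\gamma_{u_1},\gamma_{u_2}-\gamma_{u_1},\ldots,\gamma_r-\gamma_{u_n}$ and $\gamma_{v_1}-\gamma_r,\ldots,\gamma_{v_m}-\gamma_{v_{m-1}}$ are mutually independent gamma variables, and the post-$r$ block is independent of everything built from times $\leq r$. I would change variables from $(\gamma_{u_1},\ldots,\gamma_{u_n},\gamma_r)$ to $(\gamma_{u_1}/\gamma_r,\ldots,\gamma_{u_n}/\gamma_r,\gamma_r)$, compute the Jacobian, and verify that the joint density factors into the density of $\gamma_r$ and a Dirichlet density on the simplex of ratios. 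This gives independence of $(\gamma_{u_j}/\gamma_r)_{j\leq n}$ from $(\gamma_r,\gamma_{v_1}-\gamma_r,\ldots,\gamma_{v_m}-\gamma_{v_{m-1}})$, equivalently from $(\gamma_{v_j})_{j\leq m}$. Letting $n,m$ and the $u_j,v_j$ range, a monotone class / Dynkin $\pi$--$\lambda$ argument lifts this finite-dimensional statement to independence of the full $\sigma$-algebras $\sigma(\gamma_u/\gamma_r,u\in[0,r])$ and $\sigma(\gamma_u,u\in[r,\infty))$.

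For part (ii), I would use (i) to compute the relevant conditional expectation. Set $\tilde\gamma_v:=\gamma_{s+v}-\gamma_s$; by independence of increments $\tilde\gamma$ is again a gamma process, independent of $\mathcal{F}_s^{\gamma}$. Applying (i) to $\tilde\gamma$ at horizon $r-s$, the ratio $\tilde\gamma_{t-s}/\tilde\gamma_{r-s}$ is independent of $\tilde\gamma_{r-s}$ and has a $\mathrm{Beta}(t-s,r-t)$ law, with mean $(t-s)/(r-s)$. Since $\mathcal{G}_s^{(r)}$ is generated by $\mathcal{F}_s^{\gamma}$ together with $\tilde\gamma_{r-s}=\gamma_r-\gamma_s$, this yields
\[
\mathbb{E}\!\left[\gamma_u-\gamma_s\,\big|\,\mathcal{G}_s^{(r)}\right]=(\gamma_r-\gamma_s)\,\frac{u-s}{r-s},\qquad s\leq u\leq r.
\]
Combined with Fubini, this gives
\[
\mathbb{E}\!\left[\int_s^t\dfrac{\gamma_r-\gamma_u}{r-u}\,du\,\Big|\,\mathcal{G}_s^{(r)}\right]=\int_s^t\frac{\gamma_r-\gamma_s}{r-s}\,du=(\gamma_r-\gamma_s)\,\frac{t-s}{r-s},
\]
so $\mathbb{E}[M_t^r-M_s^r\mid\mathcal{G}_s^{(r)}]=0$ for $0\leq s<t\leq r$. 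Integrability of the drift is controlled by the elementary Fubini bound $\mathbb{E}\!\int_0^r(\gamma_r-\gamma_u)/(r-u)\,du=r<\infty$.

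\textbf{Main obstacle.} The technical crux is part (i): making the Dirichlet--gamma factorization rigorous and, in particular, promoting the finite-dimensional independence to independence of the two path-level $\sigma$-algebras. The remainder of the proof essentially reduces (ii) to a conditional-expectation computation powered by (i), and (iii) is then free from the L\'evy structure.
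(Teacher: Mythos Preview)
Your proposal is correct. The paper itself does not prove (i) and (ii) at all but simply refers to Em\'ery--Yor \cite{EY}; your beta--gamma / Dirichlet factorization for (i) and the conditional-expectation computation for (ii) are precisely the standard arguments carried out in that reference, and your one-line justification of (iii) via the L\'evy property matches the paper verbatim.
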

\begin{proof}
For (i) and (ii) See, \cite{EY}. (iii) $(\gamma_{t},t\geq0)$ has the Markov property since it is a Lévy process.
\end{proof}

For a deeper investigation on the properties of the gamma process
we refer to Kyprianou \cite{KY}, Sato \cite{SA} and Yor \cite{Y}. 
\subsection{Gamma bridge with deterministic length}
		A bridge is a stochastic process that is pinned to some fixed point at a fixed future time. In this section we define the gamma bridge with deterministic length and we give some important properties of this process.
For fixed $r>0$, we define the gamma bridge of length $r$ by setting
\begin{definition}
Let $r\in(0,+\infty)$. The map $\zeta^{r}:\Omega\longmapsto\mathcal{D}$
defined by 
\begin{equation}
\zeta_{t}^{r}(\omega):=\dfrac{\gamma_{t\wedge r}(\omega)}{\gamma_{ r}(\omega)},~t\geq0,~\omega\in\Omega,\label{rgammabridge}
\end{equation}
is the bridge associated with the standard gamma process $(\gamma_{t},t\geq0)$.
Then clearly $\zeta_{0}^{r}=0$ and $\zeta_{r}^{r}=1$. We refer to
$\zeta^{r}$ as the standard gamma bridge of length $r$ associated
with $\gamma$. We note that $\zeta^{r}$ is also called the Dirichlet process with parameter $r$.
\end{definition}
	We note that the process $\zeta^{r}$ is really a function of the variables $(r,t,\omega)$ and for technical reasons, it is convenient to have certain joint measurability properties.
			\begin{lemma}
				The map $(r,t,\omega)\longmapsto \zeta_t^{r}(\omega)$ of $\big((0,+\infty)\times \mathbb{R}_{+} \times \Omega ,\mathcal{B}\big((0,+\infty)\big)\otimes \mathcal{B}(\mathbb{R}_{+})\otimes\mathcal{F}\big)$ into $(\mathbb{R}_{+},\mathcal{B}(\mathbb{R}_{+}))$ is measurable. In particular, the t-section of
				$(r,t,\omega)\longmapsto \zeta_t^{r}(\omega)$: $(r,\omega)\longmapsto \zeta_t^{r}(\omega)$ is measurable with respect to the $\sigma$-algebra
				$\mathcal{B}\big((0,+\infty)\big)\otimes\mathcal{F}$, for all $t \geq 0$.
			\end{lemma}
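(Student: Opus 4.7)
The plan is to decompose the map as a composition of jointly measurable building blocks, the key one being the gamma process viewed as a function of both time and $\omega$.

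First I would establish that the map $(s,\omega) \mapsto \gamma_s(\omega)$ from $(\mathbb{R}_+\times\Omega,\mathcal{B}(\mathbb{R}_+)\otimes\mathcal{F})$ into $(\mathbb{R}_+,\mathcal{B}(\mathbb{R}_+))$ is measurable. This is a standard consequence of the càdlàg property: for each $n\in\mathbb{N}$ define the dyadic approximation
\begin{equation*}
\gamma^{(n)}_s(\omega) \;=\; \sum_{k=0}^{\infty} \gamma_{(k+1)/2^n}(\omega)\,\mathbb{I}_{[k/2^n,\,(k+1)/2^n)}(s),
\end{equation*}
which is jointly measurable as a countable sum of products of measurable functions. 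By right-continuity, $\gamma^{(n)}_s(\omega)\to \gamma_s(\omega)$ for every $(s,\omega)$, so the limit is jointly measurable.

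Next, since the map $(r,t)\mapsto t\wedge r$ from $(0,\infty)\times\mathbb{R}_+$ to $\mathbb{R}_+$ is continuous, it is Borel measurable. Composing, the map
\begin{equation*}
(r,t,\omega)\;\longmapsto\;(t\wedge r,\omega)\;\longmapsto\;\gamma_{t\wedge r}(\omega)
\end{equation*}
is $\mathcal{B}((0,\infty))\otimes\mathcal{B}(\mathbb{R}_+)\otimes\mathcal{F}$-measurable. Specializing to $t=r$, one sees that $(r,\omega)\mapsto \gamma_r(\omega)$ is $\mathcal{B}((0,\infty))\otimes\mathcal{F}$-measurable, and trivially also jointly measurable in $(r,t,\omega)$ through the projection $(r,t,\omega)\mapsto(r,\omega)$.

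Finally, I would form the quotient. The property $(v)$ of the gamma process (the density is supported on $(0,\infty)$), together with the strict increase of the paths from $\gamma_0=0$, ensures $\gamma_r(\omega)>0$ for all $r>0$ on an event $\Omega_0\in\mathcal{F}$ with $\mathbb{P}(\Omega_0)=1$. Define
\begin{equation*}
\zeta_t^{r}(\omega) \;=\; \frac{\gamma_{t\wedge r}(\omega)}{\gamma_r(\omega)}\,\mathbb{I}_{\Omega_0}(\omega),
\end{equation*}
which coincides a.s.\ with the original definition and is the ratio of two jointly measurable functions with strictly positive denominator on its domain; hence it is jointly measurable. The $t$-section assertion follows by fixing $t$, since sections of a jointly measurable map are measurable in the remaining variables.

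The only mild point requiring care is the joint measurability of $(s,\omega)\mapsto\gamma_s(\omega)$; once this is in hand, everything else is a routine composition argument, and handling the null set $\{\gamma_r=0\}$ on which the quotient would be ambiguous is straightforward because this set is negligible uniformly in $r>0$.
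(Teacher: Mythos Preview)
Your argument is correct and follows essentially the same route as the paper's proof: both exploit the continuity of $(r,t)\mapsto t\wedge r$ together with the c\`adl\`ag property of $\gamma$ to realize the map as a pointwise limit of jointly measurable functions. Your version is simply more explicit---spelling out the dyadic approximation, the composition, and the handling of the null set $\{\gamma_r=0\}$---whereas the paper compresses all of this into a single sentence.
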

			\begin{proof}
				Since the map $(r,t)\longmapsto t\wedge r$ is Lipschitz continuous
and $t\mapsto\gamma_{t}$ is càdlàg for all almost $\omega\in\Omega$,
then the map $(r,t,\omega)\longmapsto\zeta_{t}^{r}(\omega)$ can be
obtained as the pointwise limit of sequences of measurable functions.
So, it is sufficient to use standard results on the passage to the
limit of sequences of measurable functions.
			\end{proof}
			As a consequence we have the following corollary.
			\begin{corollary}\label{cormesurable}
				The map $(r,\omega)\longmapsto \zeta_t^{r}(\omega)$ of $\big((0,+\infty)\times \Omega ,\mathcal{B}\big((0,+\infty)\big)\otimes \mathcal{F}\big)$
				into $(\mathcal{D}, \mathcal{B}\left(\mathcal{D}\right))$ is measurable.
			\end{corollary}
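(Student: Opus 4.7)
The plan is to reduce the corollary to two ingredients: the $t$-section measurability already supplied by the previous lemma, and one standard structural fact about the Skorohod space. First I would recall that $\mathcal{D}$ endowed with Skorohod's $J_1$ topology is a Polish space and that its Borel $\sigma$-algebra $\mathcal{B}(\mathcal{D})$ coincides with the $\sigma$-algebra generated by the coordinate evaluations $\pi_t : \mathcal{D} \to \mathbb{R}_+$, $\pi_t(f) := f(t)$, $t \geq 0$ (this is classical, see e.g.\ Billingsley or Ethier--Kurtz). This is the only non-trivial input.

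Next, let $\Phi : (0,+\infty) \times \Omega \to \mathcal{D}$ be the map $\Phi(r,\omega) := \zeta^{r}_{\cdot}(\omega)$. This is well defined because, for every fixed $(r,\omega)$, the trajectory $t \mapsto \zeta^{r}_{t}(\omega) = \gamma_{t\wedge r}(\omega)/\gamma_{r}(\omega)$ inherits the c\`adl\`ag property from $\gamma$, so $\Phi(r,\omega) \in \mathcal{D}$. By the standard measurability criterion for a map valued in a space equipped with a generated $\sigma$-algebra, $\Phi$ is $\mathcal{B}((0,+\infty)) \otimes \mathcal{F}/\mathcal{B}(\mathcal{D})$-measurable if and only if $\pi_t \circ \Phi$ is $\mathcal{B}((0,+\infty)) \otimes \mathcal{F}/\mathcal{B}(\mathbb{R}_+)$-measurable for every $t \geq 0$. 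But $\pi_t \circ \Phi(r,\omega) = \zeta^{r}_{t}(\omega)$, and this is exactly the $t$-section statement that concludes the previous lemma.

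There is no real obstacle here; the whole content of the corollary is the passage from pointwise-in-$t$ measurability into $\mathbb{R}_+$ to measurability into $(\mathcal{D},\mathcal{B}(\mathcal{D}))$, which is automatic once one invokes the description of $\mathcal{B}(\mathcal{D})$ in terms of the coordinate projections. The only thing that could conceivably be delicate is convincing the reader that $\mathcal{B}(\mathcal{D})$ really is generated by the $\pi_t$'s in the Skorohod (rather than uniform) topology, so I would simply give the textbook reference and move on.
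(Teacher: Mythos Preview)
Your argument is correct and is precisely the natural fleshing-out of what the paper has in mind: the corollary is stated without proof, simply as a consequence of the preceding lemma, and the passage from the $t$-section measurability to $\mathcal{D}$-valued measurability via the fact that $\mathcal{B}(\mathcal{D})$ is generated by the coordinate projections is exactly the intended step.
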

A number of properties of the gamma bridge $\zeta^{r}$ sample paths can be easily deduced from the corresponding properties of the gamma sample paths. Hence, we have 

\begin{proposition}
The gamma bridge $\zeta_t^{r},\, t\geq 0$, has the following properties: 
\begin{description}
\item[(i)] $\zeta^{r}$ is a purely jump process and its jumping times
are countable and dense in $[0,r]$ a.s.;
\item[(ii)] the map $t\mapsto \zeta_t^{r}$ is strictly increasing and not continuous
anywhere in $[0,r]$ a.s.; 
\item[(iii)] $\zeta^{r}$ has sample paths of finite variation in $[0,+\infty)$ a.s.;
\item[(iv)]  $\zeta^{r}$ has the following representation:
\begin{equation}
\zeta^{r}_{t}=\underset{k\geq1}{\sum}\,\,\dfrac{J_{k}^{r}}{\underset{{j\geq1}}{\sum}J_{j}^{r}} \,\mathbb{I}_{\{ U_{k}^{r}\leq t\}},\quad t\geq 0.\label{bridgegammarep}
\end{equation}
\end{description}
\end{proposition}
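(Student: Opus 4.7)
The plan is to deduce the four properties of $\zeta^{r}$ directly from the corresponding properties of the gamma process $(\gamma_{t},t\geq 0)$ stated earlier, by exploiting the explicit formula $\zeta^{r}_{t}=\gamma_{t\wedge r}/\gamma_{r}$. First I would observe that, since $\gamma_{r}$ has the gamma density \eqref{equationgammadensity}, $\gamma_{r}(\omega)\in (0,+\infty)$ for $\mathbb{P}$-almost every $\omega$. Hence, on a set $\Omega_{0}$ of full $\mathbb{P}$-measure, the map $t\mapsto \zeta^{r}_{t}(\omega)$ on $[0,r]$ is obtained from $t\mapsto \gamma_{t}(\omega)$ by multiplication by the strictly positive constant $1/\gamma_{r}(\omega)$, and is constantly equal to $1$ on $[r,\infty)$.

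For \textbf{(i)}, this rescaling preserves the jump structure: the jump times of $\zeta^{r}$ restricted to $[0,r]$ coincide with those of $\gamma$ on $[0,r]$ (which are countable and dense there by property (ii) of the gamma process), and the process has no jumps on $(r,\infty)$, so $\zeta^{r}$ is purely jump with jump times countable and dense in $[0,r]$. Parts \textbf{(ii)} and \textbf{(iii)} follow in the same manner: strict increase of $\gamma$ on $[0,r]$ and its nowhere-continuity transfer to $\zeta^{r}$ through the positive rescaling, while for $t\geq r$ the bridge equals $1=\zeta^{r}_{r}>\zeta^{r}_{s}$ for any $s<r$; the total variation of $\zeta^{r}$ on $[0,\infty)$ is $\gamma_{r}/\gamma_{r}=1$, hence a.s.\ finite.

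For \textbf{(iv)}, I would plug the representation \eqref{gammarep} of $\gamma_{t\wedge r}$ (valid for $t\wedge r\in[0,r]$) directly into the definition \eqref{rgammabridge}, using $\gamma_{r}=\sum_{j\geq 1}J^{r}_{j}$ as the normalising denominator. Since by construction each $U^{r}_{k}\in[0,r]$, the event $\{U^{r}_{k}\leq t\wedge r\}$ coincides with $\{U^{r}_{k}\leq t\}$ both for $t\leq r$ and for $t\geq r$ (in the latter case both reduce to the full index set, which yields $\zeta^{r}_{t}=1$ as it should). This gives formula \eqref{bridgegammarep} on $\Omega_{0}$, and hence a.s.

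The only subtlety in the whole argument is this last swap between $t\wedge r$ and $t$ in the indicator in (iv); there is no genuine obstacle, since the $U^{r}_{k}$ take values in $[0,r]$ by construction. All other steps are immediate transfers of path properties through the almost-surely strictly positive normalisation by $\gamma_{r}$.
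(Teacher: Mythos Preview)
Your proposal is correct and follows precisely the approach the paper indicates: the paper does not give a detailed proof but simply states that these sample-path properties ``can be easily deduced from the corresponding properties of the gamma sample paths,'' which is exactly what you do by transferring each property of $\gamma$ on $[0,r]$ through the a.s.\ strictly positive normalisation $1/\gamma_{r}$ and then substituting the jump representation \eqref{gammarep} for part (iv).
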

We now turn to distributional properties of the gamma bridge.	
		\begin{proposition}
			\begin{enumerate}
				
				\item[(i)] For all $0<t<r$, the random variable $\zeta_t^{r}$ has a beta distribution $\beta (t,r-t)$ i.e. its density function is given by
				\begin{equation}\label{equationbridgedensity}
				\varphi_{\zeta_{t}^{r}}(x)=\dfrac{\Gamma(r)}{\Gamma(t)\Gamma(r-t)}\,x^{t-1}(1-x)^{r-t-1}\,\mathbb{I}_{\left(0,1\right)}(x).
				\end{equation}
				\item[(ii)] For any $0=t_{0}<t_{1}<\ldots<t_{n}=r$, the vector $\left(\zeta_{t_{1}}^{r}-\zeta_{t_{0}}^{r},\ldots,\zeta_{t_{n}}^{r}-\zeta_{t_{n-1}}^{r}\right)$
is independent from $\gamma_{r}$, with density 
\[
\dfrac{\Gamma(r)}{\overset{n}{\underset{i=1}{\prod}}\;\Gamma(t_{i}-t_{i-1})}\,\overset{n}{\underset{i=1}{\prod}}\;x_{i}^{t_{i}-t_{i-1}-1}
\]
with respect to the Lebesgue measure $dx_{1}\ldots dx_{n-1}$ (or,
as well, $dx_{2}\ldots dx_{n}$) on the simplex 
\[
\left\{ \left(x_{1},\ldots,x_{n}\right):\,x_{i}\geq0,\,x_{1}+\ldots+x_{n}=1\right\} .
\]
					\item[(iii)] For all $t<u<r$ and $x\in (0,1)$, the regular conditional law of
		$\zeta_u^{r}$ given $\zeta_t^{r}=x$ is given by:  
					
					\begin{eqnarray}
					\mathbb{P}(\zeta_{u}^{r}\in dy|\zeta_{t}^{r}=x)&=&\dfrac{\Gamma(r-t)}{\Gamma(u-t)\Gamma(r-u)}\,\dfrac{(y-x)^{u-t-1}(1-y)^{r-u-1}}{(1-x)^{r-t-1}}\,\mathbb{I}_{\{x<y<1\}}\,dy,\label{equationtransitionlaw}
					\end{eqnarray}			
				\end{enumerate}
			\end{proposition}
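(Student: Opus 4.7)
The approach is to derive everything from the fundamental fact that, by independent increments of the gamma subordinator, the variables $\Delta_i := \gamma_{t_i}-\gamma_{t_{i-1}}$ are independent with $\Delta_i\sim\Gamma(t_i-t_{i-1},1)$, and then apply the classical beta-gamma algebra. I will handle the three items in order, using (ii) as the workhorse from which (i) and (iii) follow.

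For (ii), the plan is to perform the change of variables
\[
(\Delta_1,\ldots,\Delta_n)\longmapsto(Y_1,\ldots,Y_{n-1},S),\qquad Y_i=\Delta_i/S,\ S=\textstyle\sum_j\Delta_j=\gamma_r,
\]
whose Jacobian determinant is $S^{n-1}$. Multiplying the product of marginal gamma densities by this Jacobian and regrouping the exponential factor as $\exp(-S)$ gives a density that factorizes as a Dirichlet density in $(Y_1,\ldots,Y_{n-1})$ (on the simplex) times a $\Gamma(r,1)$ density in $S$. This simultaneously proves the stated density of the increments of $\zeta^{r}$ and the independence from $\gamma_r$, because the joint density splits as a product.

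Item (i) is then a direct specialization: apply (ii) to the partition $0<t<r$ to obtain the joint law of $(\zeta_t^r,1-\zeta_t^r)$, whose first marginal is precisely the $\beta(t,r-t)$ density \eqref{equationbridgedensity}. Alternatively, one may argue more quickly: $\zeta_t^r=\gamma_t/(\gamma_t+(\gamma_r-\gamma_t))$ is a ratio of an independent $\Gamma(t,1)$ to the sum of that and an independent $\Gamma(r-t,1)$, hence is beta distributed with parameters $(t,r-t)$.

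For (iii), apply (ii) with partition $0<t<u<r$ to get the joint density of $(\zeta_t^r,\zeta_u^r-\zeta_t^r,1-\zeta_u^r)$ on the $2$-simplex; passing to the density of $(\zeta_t^r,\zeta_u^r)$ just amounts to reading off the Dirichlet density at $(x,y-x,1-y)$ with the indicator $\{0<x<y<1\}$, i.e.
\[
f_{(\zeta_t^r,\zeta_u^r)}(x,y)=\frac{\Gamma(r)}{\Gamma(t)\Gamma(u-t)\Gamma(r-u)}\,x^{t-1}(y-x)^{u-t-1}(1-y)^{r-u-1}\mathbb{I}_{\{0<x<y<1\}}.
\]
Dividing by the beta marginal from (i) produces the expression in \eqref{equationtransitionlaw}. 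The only mildly delicate point is bookkeeping—verifying the domain of the Dirichlet density (the open simplex), the Jacobian of the affine change of coordinates $(y_1,y_2)=(x,y-x)$, and the cancellation of gamma factors—so that the stated explicit form emerges cleanly; there is no analytical difficulty beyond that.
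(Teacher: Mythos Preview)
Your argument is correct and is in fact the standard derivation of the Dirichlet law from independent gamma variables via the beta--gamma algebra. The Jacobian computation, the factorization into a $\Gamma(r,1)$ density in $S$ and a Dirichlet density in $(Y_1,\dots,Y_{n-1})$, and the subsequent specializations to obtain (i) and (iii) are all sound; the bookkeeping you flag as ``mildly delicate'' is routine.

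Regarding comparison with the paper: the paper does not supply a proof of this proposition at all. It is stated as a known result (the natural reference is \'Emery--Yor \cite{EY}, which the paper cites for the closely related Proposition~\ref{propstrongpropertiesofgamma}), and the exposition moves directly to the next statement. So there is nothing to compare your route against; you have simply filled in what the paper takes for granted, and you have done so by the classical method.
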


In the same spirit as in the Proposition \ref{propstrongpropertiesofgamma} we have

			\begin{proposition}\label{lemmaN^rmartingale}
			 \begin{description}
			 			 \item[(i)] $\zeta^{r}$ is a Markov process with respect to its natural filtration.	
			  \item[(ii)] $\zeta^{r}$ satisfies the following equation 
					\begin{equation}
				\zeta_{t}^{r}=N_{t}^{r}+\int_{0}^{t}\dfrac{1-\zeta_{s}^{r}}{r-s}ds, \quad t\in[0, r],\label{equationN^rmartingale}
					\end{equation}
					where $(N^r_t, t\in[0, r])$ is a $\mathbb{F}^{\zeta^{r}}$-martingale.
					\end{description}

			\end{proposition}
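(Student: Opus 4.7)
The plan is to prove (i) via the Dirichlet structure of the bridge increments recalled in the preceding proposition, and to derive (ii) by dividing the gamma-process bridge decomposition of Proposition~\ref{propstrongpropertiesofgamma}(ii) by the terminal value $\gamma_r$.

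For (i), fix times $0 < t_1 < \cdots < t_n = t < u < r$. By part (ii) of the preceding proposition, the vector $(\zeta_{t_1}^r, \zeta_{t_2}^r - \zeta_{t_1}^r, \ldots, \zeta_t^r - \zeta_{t_{n-1}}^r, \zeta_u^r - \zeta_t^r, 1 - \zeta_u^r)$ follows a Dirichlet law with parameters $(t_1, t_2-t_1, \ldots, t-t_{n-1}, u-t, r-u)$. Invoking the aggregation/neutrality property of the Dirichlet distribution, conditionally on the first $n$ coordinates (hence on their sum $\zeta_t^r = x$) the normalized residual $(\zeta_u^r - x)/(1-x)$ is $\beta(u-t, r-u)$-distributed, with no further dependence on the conditioning beyond $x$. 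This gives an independent derivation of \eqref{equationtransitionlaw} and yields $\mathbb{E}[f(\zeta_u^r) \mid \zeta_{t_1}^r, \ldots, \zeta_t^r] = \mathbb{E}[f(\zeta_u^r) \mid \zeta_t^r]$ for every bounded Borel $f$. A standard monotone class argument then extends the identity to conditioning on $\mathcal{F}_t^{\zeta^r}$, giving the Markov property.

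For (ii), divide the decomposition of Proposition~\ref{propstrongpropertiesofgamma}(ii), namely $\gamma_t = M_t^r + \int_0^t (\gamma_r - \gamma_s)/(r-s)\, ds$, by the strictly positive, $\mathcal{G}_0^{(r)}$-measurable random variable $\gamma_r$. This produces the claimed identity with
\[
N_t^r := \frac{M_t^r}{\gamma_r} = \zeta_t^r - \int_0^t \frac{1 - \zeta_s^r}{r-s}\, ds,
\]
which is manifestly $\mathbb{F}^{\zeta^r}$-adapted and satisfies $\mathbb{E}|N_t^r| \leq t/r + \int_0^t (1 - s/r)/(r-s)\, ds = 2t/r < \infty$. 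Since multiplication by the $\mathcal{G}_0^{(r)}$-measurable factor $1/\gamma_r$ preserves the $\mathcal{G}^{(r)}$-martingale property of $M^r$, the process $N^r$ is a $\mathcal{G}^{(r)}$-martingale. As $\mathcal{F}_t^{\zeta^r} \subset \mathcal{G}_t^{(r)}$ (since $\zeta_s^r = \gamma_s/\gamma_r$ for $s \le r$), the tower property then gives that $N^r$ is also an $\mathbb{F}^{\zeta^r}$-martingale.

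The main point to watch is the integrability of the quotient $M_t^r/\gamma_r$: a naive bound $\mathbb{E}|M_t^r/\gamma_r| \le \mathbb{E}|M_t^r|\,\mathbb{E}[1/\gamma_r]$ breaks down when $r \le 1$, since $1/\gamma_r$ is not integrable in that regime. This obstacle is circumvented by reading integrability directly from the alternative expression for $N_t^r$ above. As a cross-check, and as an independent route to (ii) that avoids $\gamma$ entirely, one can combine (i) with a direct integration against the transition density \eqref{equationtransitionlaw} to verify
\[
\mathbb{E}\bigl[\zeta_u^r - \zeta_t^r \mid \mathcal{F}_t^{\zeta^r}\bigr] = \frac{(1 - \zeta_t^r)(u - t)}{r - t} = \mathbb{E}\biggl[\int_t^u \frac{1 - \zeta_s^r}{r - s}\, ds \;\bigg|\; \mathcal{F}_t^{\zeta^r}\biggr],
\]
which is precisely the martingale property of $N^r$.
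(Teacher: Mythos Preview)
Your proof is correct. For part (ii) you follow essentially the same route as the paper---divide the decomposition of Proposition~\ref{propstrongpropertiesofgamma}(ii) by $\gamma_r$, observe that $N^r$ is $\mathbb{F}^{\zeta^r}$-adapted from its explicit expression, note that $\gamma_r$ is $\mathcal{G}^{(r)}_0$-measurable so that $N^r$ inherits the $\mathcal{G}^{(r)}$-martingale property from $M^r$, and conclude via the tower property and the inclusion $\mathcal{F}_t^{\zeta^r}\subset\mathcal{G}_t^{(r)}$. Your explicit attention to the integrability of $M_t^r/\gamma_r$ (and the warning that $\mathbb{E}[1/\gamma_r]=\infty$ for $r\le 1$) is a point the paper leaves implicit; reading integrability off the formula $N_t^r=\zeta_t^r-\int_0^t(1-\zeta_s^r)/(r-s)\,ds$ is the clean way to handle it.

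For part (i) you take a genuinely different route. The paper rewrites the conditioning $\sigma(\zeta_{t_1}^r,\ldots,\zeta_{t_n}^r)$ as $\sigma(\gamma_{t_1}/\gamma_{t_2},\ldots,\gamma_{t_{n-1}}/\gamma_{t_n},\gamma_{t_n}/\gamma_r)$ and then invokes Proposition~\ref{propstrongpropertiesofgamma}(i) to peel off the early ratios, which are independent of $(\gamma_{t_n}/\gamma_r,\gamma_u/\gamma_r)$. You instead appeal to the Dirichlet law of the increment vector and its neutrality property, which gives directly that the conditional law of $(\zeta_u^r-\zeta_t^r)/(1-\zeta_t^r)$ given the past depends only on $\zeta_t^r$. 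Your argument has the merit of being intrinsic to $\zeta^r$ (it does not return to the gamma process), and it simultaneously re-derives the transition kernel \eqref{equationtransitionlaw}; the paper's argument is shorter once Proposition~\ref{propstrongpropertiesofgamma}(i) is in hand and makes the multiplicative structure of the gamma process do all the work.
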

			\begin{proof}
(i)	 From Theorem 1.3 in Blumenthal and Getoor \cite{BG} it suffices to prove that for every bounded measurable function $g$ we have:
					\begin{equation}
					\mathbb{E}[g(\zeta_u^{r})|\zeta^{r}_{t_1},\ldots,\zeta^{r}_{t_n}]=\mathbb{E}[g(\zeta_u^{r})|\zeta^{r}_{t_n}],\label{equationmarkovzeta^r}
					\end{equation}
					for all $0\leq t_1< \ldots < t_n< u \leq r$ and for all $n\geq 1$.\\
					Using Proposition \ref{propstrongpropertiesofgamma} (i) we have
					\begin{align*}
					\mathbb{E}[g(\zeta_u^{r})|\zeta^{r}_{t_1},\ldots,\zeta^{r}_{t_n}]&=\mathbb{E}\bigg[g(\dfrac{\gamma_u}{\gamma_r})|\dfrac{\gamma_{t_1}}{\gamma_r},\ldots,\dfrac{\gamma_{t_n}}{\gamma_r}\bigg]\\
					&=\mathbb{E}\bigg[g(\dfrac{\gamma_u}{\gamma_r})|\dfrac{\gamma_{t_1}}{\gamma_{t_2}},\dfrac{\gamma_{t_2}}{\gamma_{t_3}},\ldots,\dfrac{\gamma_{t_{n-1}}}{\gamma_{t_n}},\dfrac{\gamma_{t_n}}{\gamma_r}\bigg]\\
					&=\mathbb{E}\bigg[g(\dfrac{\gamma_u}{\gamma_r})|\dfrac{\gamma_{t_n}}{\gamma_r}\bigg]\\
					&=\mathbb{E}\bigg[g(\zeta_u^{r})|\zeta^{r}_{t_n}\bigg].
					\end{align*}
					Hence the formula \eqref{equationmarkovzeta^r} is proved, then $\zeta^{r}$ is a Markov process with respect to its natural filtration.
					
		(ii)		We have from Proposition \ref{propstrongpropertiesofgamma} (ii) that $$\gamma_t=M_t^r+\int_0^t\dfrac{\gamma_r -\gamma_s}{r-s} ds, \quad t\in[0, r],$$ where $M^r$ is a martingale with respect to the filtration $\mathcal{G}_t^{(r)}=\sigma(\gamma_s, s\in [0, t]\cup \{r\})$. Then it is easy to see that
	\begin{equation}
	\zeta_{t}^{r}=N_t^r+\int_{0}^{t}\dfrac{1-\zeta_{s}^{r}}{r-s}ds,\quad t\in[0, r],\label{xirep}
	\end{equation}
		where $N_t^r=\dfrac{M_t^r}{\gamma_r}, t\in[0, r]$. Firstly, notice that, $\mathcal{F}_t^{\zeta^{r}}\subset\mathcal{G}_t^{(r)}$ and $\gamma_r$ is $\mathcal{G}_t^{(r)}$-measurable for all $t\leq r$.  Moreover, equation \eqref{xirep} yields that is  the process $ N^r$ is $\mathbb{F}^{\zeta^{r}}$-adapted. In view of these considerations, as well as the fact that $M_t^r$ is a $\mathcal{G}_t^{(r)}$-martingale we obtain  
					\[
\begin{array}{lll}
\mathbb{E}\bigg[N_{t}^{r}\vert\mathcal{F}_{s}^{\zeta^{r}}\bigg] & = & \mathbb{E}\bigg[\dfrac{M_{t}^{r}}{\gamma_{r}}\vert\mathcal{F}_{s}^{\zeta^{r}}\bigg]=\mathbb{E}\left[\mathbb{E}\bigg[\dfrac{M_{t}^{r}}{\gamma_{r}}\vert\mathcal{G}_{s}^{(r)}\bigg]\vert\mathcal{F}_{s}^{\zeta^{r}}\right]\\
\\
 & = & \mathbb{E}\left[\dfrac{M_{s}^{r}}{\gamma_{r}}\vert\mathcal{F}_{s}^{\zeta^{r}}\right]=\mathbb{E}\left[N_{s}^{r}\vert\mathcal{F}_{s}^{\zeta^{r}}\right]=N_{s}^{r},
\end{array}
\]
for $0\leq s\leq t\leq r$. It follows that $(N^r_t, t\in[0, r])$ is a $\mathbb{F}^{\zeta^{r}}$-martingale. Hence the equation \eqref{equationN^rmartingale} is satisfied.
		\end{proof}
	\begin{remark}\label{stoppedmart}
	We can rewrite \eqref{gammarepmart} in the form 
	\begin{equation}
	\gamma_{t\wedge r}=M_{t\wedge r}^r+\int_0^{t\wedge r}\dfrac{\gamma_r -\gamma_s}{r-s} ds, \quad t\geq 0.
	\end{equation} Then we obtain 
	\begin{equation}
	\zeta_{t}^{r}=\dfrac{\gamma_{t\wedge r}}{\gamma_{ r}}=\dfrac{M_{t\wedge r}^r}{\gamma_r}+\int_{0}^{t\wedge r}\dfrac{1-\zeta_{s}^{r}}{r-s}ds,\quad t \geq 0.
	\end{equation}
	 For every $t\geq 0$, we set $\widehat{N}_t^r=\dfrac{M_{t\wedge r}^r}{\gamma_r}$. We have thus 
	 
	 \begin{equation}\label{bridgemartrep}
	\zeta_{t}^{r}=\widehat{N}_t^r+\int_{0}^{t}\dfrac{1-\zeta_{s}^{r}}{r-s}\,\mathbb{I}_{\{s<r\}} \,ds,\quad t \geq 0.
	\end{equation}
	 It follows from the above proposition that $(\widehat{N}_t^r, t\geq 0)$ is a $\mathbb{F}^{\zeta^{r}}$-martingale stopped at $r$.
	\end{remark}	

\begin{center}
	\section{Gamma bridges with random length}\label{sectionstoppingtimeproperty}
\end{center}
In this section we define and study a process $(\zeta_t, t\geq 0)$ which generalizes the gamma bridge in the sense that the time $r$ at which the bridge is pinned is substituted by an independent random time $\tau$. We call it \textit{gamma bridge with random length}. We prove that the random time $\tau$ is a stopping time with respect to the completed filtration $\mathbb{F}^{\zeta,c}$ and we give the regular conditional distribution of $\tau$ and $(\tau,\zeta_.)$ given $\zeta_.$. Moreover, we prove that the gamma bridge with random length $\zeta$ is an inhomogeneous Markov process with respect to its completed natural filtration $\mathbb{F}^{\zeta,c}$ as well as with respect to $\mathbb{F}^{\zeta,c}_+$. The last property allows us to deduce an interesting consequence that is the filtration $\mathbb{F}^{\zeta,c}$ satisfies the usual conditions of completeness and right-continuity. Finally we give the semimartingale decomposition of $\zeta$ with respect to $\mathbb{F}^{\zeta,c}_+$.\\
	 
	 Now we give precise definition of the process $(\zeta_t, t\geq 0)$. Due to Corollary \ref{cormesurable} we could substitute $r$ by a random time $\tau$ in \eqref{rgammabridge}. Thus we obtain
	\begin{definition}
		Let $\tau: (\Omega,\mathcal{F},\mathbb{P}) \longmapsto (0,+\infty)$ be a strictly positive random time, with distribution function $F(t) := \mathbb{P}(\tau \leq t)$, $t \geq 0$.
		The map $\zeta :\Omega,\mathcal{F})\longrightarrow (\mathcal{D}, \mathcal{B}\left(\mathcal{D}\right)$ is defined by 
		$$\zeta_{t}(\omega):=\zeta_{t}^{r}(\omega)\vert_{r=\tau(\omega)}~~, (t,\omega) \in \mathbb{R}_{+} \times \Omega .$$
		Then $\zeta$ takes the form	
		\begin{equation}
		\zeta_{t}:=\dfrac{\gamma_{t\wedge \tau}}{\gamma_{\tau}},~~ t\geq 0. \label{defzeta}
		\end{equation}
	\end{definition}
	Since $\zeta$ is obtained by composition of two maps $(r,t,\omega)\longmapsto \zeta^{r}_t(\omega)$ and $(t,\omega)\longmapsto(\tau(\omega), t, \omega)$, it's not hard to verify that the map $\zeta :\Omega,\mathcal{F})\longrightarrow (\mathcal{D}, \mathcal{B}\left(\mathcal{D}\right)$ is measurable. The process $\zeta$ will be called \textit{gamma bridge of random length $\tau$}.
	
	As mentioned above, we work under the following standing assumption:
	\begin{hy}\label{hyindependent}
		The random time $\tau$ and the gamma process $\gamma$ are independent.
	\end{hy} 

Using the fact that the process $\zeta$ is obtained by the substitution of $r$ in $\zeta^{r}$ by the random time $\tau$ allows us to derive a lot of information about its path properties. Hence, we have 

\begin{proposition}
The gamma bridge $\zeta_t,\, t\geq 0$, has the following properties: 
\begin{description}
\item[(i)] $\zeta$ is a purely jump process and its jumping times
are countable and dense in $[0,\tau]$ a.s.;
\item[(ii)] the map $t\mapsto \zeta_t$ is increasing and not continuous
anywhere on $[0,\tau]$  a.s.; 
\item[(iii)] $\zeta$ has sample paths of finite variation a.s.
\item[(iv)]  $\zeta$ has the following representation:
\[
\zeta_{t}=\underset{k\geq1}{\sum}\,\,\dfrac{J_{k}^{\tau}}{\underset{{j\geq1}}{\sum}J_{j}^{\tau}}\,\mathbb{I}_{\{U_{k}^{\tau}\leq t\}},\quad t\geq0,
\]
where the jump times $\left(U_{1}^{\tau},U_{2}^{\tau},\ldots\right)$ constitute
a sequence of r.v.\textquoteright s identically distributed with the law given by
\[
\begin{array}{lll}
\mathbb{P}\left[U_{k}^{\tau}\leq t\right] & = & \mathbb{P}\left[\tau\leq t\right]+\dint_{(t,+\infty)}\,\mathbb{P}\left[U_{k}^{r}\leq t\right]\,\mathbb{P}_{\tau}(dr)\\
\\
 & = & \mathbb{P}\left[\tau\leq t\right]+t\,\mathbb{E}\left[\dfrac{1}{\tau}\,\mathbb{I}_{(\tau> t)}\right], \, t \geq 0, k\geq 1.
\end{array}
\]\end{description}
\end{proposition}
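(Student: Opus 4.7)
The plan is to exploit the defining relation $\zeta_t(\omega)=\zeta_t^{r}(\omega)\big|_{r=\tau(\omega)}$ together with Assumption \ref{hyindependent} (independence of $\tau$ and $\gamma$) so that every path property of $\zeta$ can be obtained by conditioning on $\tau=r$ and then integrating against $\mathbb{P}_{\tau}$. Because $\tau$ is independent of $\gamma$, for any Borel set $A\subset\mathcal{D}$ one has $\mathbb{P}(\zeta\in A\mid\tau=r)=\mathbb{P}(\zeta^{r}\in A)$ for $\mathbb{P}_{\tau}$-a.e. $r$. Applying this with $A$ equal to the (Borel) set of c\`adl\`ag paths which are purely jump, whose jumps are dense in $[0,r]$, which are strictly increasing on $[0,r]$ and constant on $[r,+\infty)$, and which have finite variation, one obtains the conclusions (i), (ii), (iii) directly from the analogous properties of $\zeta^{r}$ established in the preceding proposition. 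Note that (ii) is weakened from \emph{strictly increasing} to \emph{increasing} since $\zeta_t\equiv 1$ on $[\tau,+\infty)$; on $[0,\tau]$ strict monotonicity and nowhere-continuity are inherited from $\zeta^{\tau}$.

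For (iv), the representation of $\zeta$ is simply obtained by plugging $r=\tau(\omega)$ into the representation \eqref{bridgegammarep} of $\zeta^{r}$, using the measurability shown in Corollary \ref{cormesurable}. It only remains to compute the law of the (now randomly indexed) jump times $U_k^{\tau}$. By conditioning on $\tau$ and using that on $\{\tau\leq t\}$ one automatically has $U_k^{\tau}\leq\tau\leq t$, while for $\tau=r>t$ the variable $U_k^{r}$ is uniform on $[0,r]$ (as recalled after \eqref{gammarep}), I would write
\[
\mathbb{P}(U_k^{\tau}\leq t)=\mathbb{P}(\tau\leq t)+\int_{(t,+\infty)}\mathbb{P}(U_k^{r}\leq t)\,\mathbb{P}_{\tau}(dr)=\mathbb{P}(\tau\leq t)+\int_{(t,+\infty)}\frac{t}{r}\,\mathbb{P}_{\tau}(dr),
\]
which is exactly the claimed formula once the last integral is rewritten as $t\,\mathbb{E}\bigl[\tau^{-1}\mathbb{I}_{\{\tau>t\}}\bigr]$.

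There is no genuine obstacle here; the only subtle point is a measure-theoretic one, namely ensuring that the \emph{almost sure} path properties of $\zeta^{r}$, which a priori hold on a null set depending on $r$, produce a single $\mathbb{P}$-null exceptional set for $\zeta$ once $r$ is randomised by $\tau$. This is handled by the joint measurability of $(r,\omega)\mapsto\zeta^{r}(\omega)$ (Corollary \ref{cormesurable}) combined with Fubini's theorem: the set $N=\{(r,\omega):\zeta^{r}(\omega)\text{ fails the property}\}$ is measurable with $r$-sections of $\mathbb{P}$-measure zero, so its preimage under $\omega\mapsto(\tau(\omega),\omega)$ is $\mathbb{P}$-negligible by independence of $\tau$ and $\gamma$. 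This single measure-theoretic argument is what makes the transfer from deterministic to random length rigorous, and it is the only step that deserves careful writing; the rest is a bookkeeping exercise.
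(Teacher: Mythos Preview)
Your proposal is correct and follows exactly the approach the paper intends: the paper does not give a detailed proof of this proposition but simply states that ``using the fact that the process $\zeta$ is obtained by the substitution of $r$ in $\zeta^{r}$ by the random time $\tau$ allows us to derive a lot of information about its path properties,'' and then records the result. Your conditioning-on-$\tau$ argument, the computation of $\mathbb{P}(U_k^{\tau}\le t)$ via the uniform law of $U_k^{r}$ on $[0,r]$, and your Fubini-based handling of the $r$-dependent null sets are precisely the details the paper leaves implicit.
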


\subsection{Stopping time property of $\tau$}


	The aim of this subsection is to prove that the random time $\tau$ is a stopping time
	with respect to $\mathbb{F}^{\zeta,c}$.

	\begin{proposition}
        For all $t>0$, we have $\mathbb{P}\left(\{\zeta_t = 1\} \bigtriangleup \{\tau \leq t\}\right)=0$. 
		Then $\tau $ is a stopping time with respect to $\mathbb{F}^{\zeta,c}$ and consequently it is a stopping time with respect to $\mathbb{F}^{\zeta,c}_{+}$.
	\end{proposition}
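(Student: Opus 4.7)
The plan is to prove the two inclusions in the symmetric difference separately, relying in an essential way on independence of $\tau$ and $\gamma$ together with the fact that the increments of the gamma process have an absolutely continuous law.

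First I would note that on $\{\tau \leq t\}$ one has $t \wedge \tau = \tau$, hence by definition $\zeta_t = \gamma_\tau/\gamma_\tau = 1$ almost surely (using that $\gamma_r > 0$ a.s.\ for every $r > 0$, so after integration against $\mathbb{P}_\tau$ we have $\gamma_\tau > 0$ a.s.). This gives the inclusion $\{\tau \leq t\} \subset \{\zeta_t = 1\}$ up to a $\mathbb{P}$-null set.

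The substantive direction is to show $\mathbb{P}(\zeta_t = 1,\, \tau > t) = 0$. On $\{\tau > t\}$ we have $\zeta_t = \gamma_t/\gamma_\tau$, so $\zeta_t = 1$ forces $\gamma_t = \gamma_\tau$. Using the independence of $\tau$ and $\gamma$ (Assumption \ref{hyindependent}) together with Fubini, I would write
\begin{equation*}
\mathbb{P}(\zeta_t = 1,\, \tau > t) \;=\; \int_{(t,+\infty)} \mathbb{P}\bigl(\gamma_r - \gamma_t = 0\bigr)\, \mathbb{P}_\tau(dr).
\end{equation*}
For each $r > t$, the increment $\gamma_r - \gamma_t$ has a $\Gamma(r-t,1)$ distribution, which is absolutely continuous, so $\mathbb{P}(\gamma_r - \gamma_t = 0) = 0$. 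Hence the whole integral vanishes. This is the only step that requires genuine computation, and I would flag it as the main technical point, since it is where the absolutely continuous law of the gamma increments is used to exclude coincidence with the pre-$\tau$ value.

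Combining the two inclusions yields $\mathbb{P}(\{\zeta_t=1\}\triangle\{\tau\leq t\}) = 0$ for every $t > 0$. Since $\{\zeta_t = 1\} \in \mathcal{F}^\zeta_t$, completeness of $\mathbb{F}^{\zeta,c}$ gives $\{\tau \leq t\} \in \mathcal{F}^{\zeta,c}_t$, so $\tau$ is an $\mathbb{F}^{\zeta,c}$-stopping time. The statement for $\mathbb{F}^{\zeta,c}_+$ is then immediate from the inclusion $\mathcal{F}^{\zeta,c}_t \subset \mathcal{F}^{\zeta,c}_{t^+}$.
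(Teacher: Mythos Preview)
Your proof is correct and follows essentially the same approach as the paper: both establish the inclusion $\{\tau\le t\}\subset\{\zeta_t=1\}$ directly from the definition, and both handle the converse by conditioning on $\tau=r$ via independence and showing the resulting probability vanishes. The only cosmetic difference is that the paper phrases the key null event as $\mathbb{P}(\zeta_t^r=1)=0$ using the beta law of $\zeta_t^r$, whereas you phrase it as $\mathbb{P}(\gamma_r-\gamma_t=0)=0$ using the gamma law of the increment; these are the same event.
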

	\begin{proof}
		First we have from the definition of $\zeta$ that  $\zeta_t=1$ for $\tau \leq t$. Then $\{\tau\leq t\}\subseteq \{\zeta_t=1\}$. On the other hand, using the formula of total probability we obtain
		\begin{align*}
		\mathbb{P}(\zeta_{t}=1,t<\tau)&=\dint_{(t,+\infty)} \mathbb{P}(\zeta_{t}=1 | \tau=r) \mathbb{P}_{\tau}(dr)
		\\  
		&=\dint_{(t,+\infty)} \mathbb{P}(\zeta_{t}^{r}=1) \mathbb{P}_{\tau}(dr) \\
		&=0.
		\end{align*}
	    The latter equality uses the fact that $\zeta_t^{r}$ is a random variable has a beta distribution for $0 < t < r$. 
		Thus $\mathbb{P}\left(\{\zeta_t=1\} \bigtriangleup \{\tau \leq t\}\right)=0$.
		It follows that the event $\{\tau \leq t\}$ belongs to  $\mathcal{F}_t^{\zeta} \vee \mathcal{N}_P,$ for all $t \geq 0$. Hence $\tau$ is a stopping time with respect to $\mathbb{F}^{\zeta,c}$ and consequently it is also a stopping time with respect to $\mathbb{F}^{\zeta,c}_{+}$.
	\end{proof}
	In order to determine the conditional law of the random time $\tau$ given $\zeta_{t}$ we will use the following
	\begin{proposition}\label{propbayesestimatejusquatn}
		Let $t>0$ such that $F(t)>0$.  Let $g:\mathbb{R}_{+}\longrightarrow \mathbb{R} $ be a Borel function satisfying $\mathbb{E}[|g(\tau)|]<+\infty$. Then, $\mathbb{P}$-a.s., we have	
		\begin{equation}\label{taucondxi}
		\mathbb{E}[g(\tau)\vert\zeta_{t}]= \dint_{(0,t]}\frac{g(r)}{F(t)}\mathbb{P}_{\tau}(dr)\;\mathbb{I}_{\{\zeta_{t}=1\}}
		+\dint_{(t,+\infty)}g(r)\phi_{\zeta_{t}^{r}}(\zeta_{t})\mathbb{P}_{\tau}(dr)\; \mathbb{I}_{\{0<\zeta_{t}<1\}},
		\end{equation}
		where the function $\phi_{\zeta_{t}^{r}}$ is defined on $\mathbb{R}$ by:
		\begin{eqnarray}
		\phi_{\zeta_{t}^{r}}(x)&=&\dfrac{\varphi_{\zeta_{t}^{r}}(x)}{\dint_{(t,+\infty)}\varphi_{\zeta_{t}^{s}}(x)\mathbb{P}_{\tau}(ds)}\nonumber\\
		&=&\dfrac{\left(1-x\right)^{r}\dfrac{\Gamma(r)}{\Gamma(r-t)}}{\dint_{(t_,+\infty)}\left(1-x\right)^{s}\dfrac{\Gamma(s)}{\Gamma(s-t)}\mathbb{P}_{\tau}(ds)}\,\mathbb{I}_{(0,1)}(x),\quad  x\in \mathbb{R},~~r\in (t,+\infty). \label{equationphi}
		\end{eqnarray}
	\end{proposition}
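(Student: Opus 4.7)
The plan is to identify the conditional expectation $\mathbb{E}[g(\tau)|\zeta_t]$ via the standard duality characterisation: find a Borel function $h$ on $[0,1]$ such that $\mathbb{E}[g(\tau)\,k(\zeta_t)]=\mathbb{E}[h(\zeta_t)\,k(\zeta_t)]$ for every bounded Borel $k$. Exploiting $\zeta_t=\zeta_t^{\tau}$, the joint measurability of $(r,\omega)\mapsto\zeta_t^{r}(\omega)$ from Corollary \ref{cormesurable}, and Assumption \ref{hyindependent}, I would condition on $\tau$ to write
\[
\mathbb{E}[g(\tau)\,k(\zeta_t)]\;=\;\int_{(0,+\infty)} g(r)\,\mathbb{E}[k(\zeta_t^{r})]\,\mathbb{P}_{\tau}(dr),
\]
and then split the $r$-integral at $t$. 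For $r\leq t$ the bridge is already pinned, so $\zeta_t^{r}=1$ almost surely; for $r>t$, $\zeta_t^{r}$ has the beta density $\varphi_{\zeta_t^{r}}$ of \eqref{equationbridgedensity}. Fubini (legitimate thanks to $\mathbb{E}[|g(\tau)|]<+\infty$ and boundedness of $k$) then gives
\[
\mathbb{E}[g(\tau)\,k(\zeta_t)]\;=\;k(1)\int_{(0,t]} g(r)\,\mathbb{P}_{\tau}(dr)\;+\;\int_{0}^{1} k(x)\Bigl(\int_{(t,+\infty)} g(r)\,\varphi_{\zeta_t^{r}}(x)\,\mathbb{P}_{\tau}(dr)\Bigr)dx.
\]

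Next I would apply the same computation with $g\equiv 1$ to obtain the law of $\zeta_t$ as the mixture
\[
\mathbb{P}_{\zeta_t}(dx)\;=\;F(t)\,\delta_{1}(dx)\;+\;\Bigl(\int_{(t,+\infty)}\varphi_{\zeta_t^{r}}(x)\,\mathbb{P}_{\tau}(dr)\Bigr)\mathbb{I}_{(0,1)}(x)\,dx.
\]
Expanding $\mathbb{E}[h(\zeta_t)k(\zeta_t)]$ against this mixture and matching, for arbitrary bounded $k$, the coefficient of $k(1)$ and the density of the absolutely continuous part against the previous display, we are forced to take
\[
h(1)=\frac{1}{F(t)}\int_{(0,t]} g(r)\,\mathbb{P}_{\tau}(dr),\qquad h(x)=\int_{(t,+\infty)} g(r)\,\phi_{\zeta_t^{r}}(x)\,\mathbb{P}_{\tau}(dr)\ \text{for } 0<x<1,
\]
with $\phi_{\zeta_t^{r}}$ exactly as in the first line of \eqref{equationphi}. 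Multiplying by the indicators $\mathbb{I}_{\{\zeta_t=1\}}$ and $\mathbb{I}_{\{0<\zeta_t<1\}}$ and adding yields \eqref{taucondxi}, the two cases being disjoint up to $\mathbb{P}$-null sets since $\mathbb{P}(\zeta_t\in\{0\})=0$ for $t>0$ (the beta density vanishes only at the boundary and $\mathbb{P}(\tau=0)=0$).

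Finally, the second form of $\phi_{\zeta_t^{r}}$ is purely algebraic: pull the $r$-independent prefactor $\frac{x^{t-1}}{\Gamma(t)(1-x)^{t+1}}$ out of both numerator and denominator in the ratio $\varphi_{\zeta_t^{r}}(x)/\int\varphi_{\zeta_t^{s}}(x)\,\mathbb{P}_{\tau}(ds)$, leaving $(1-x)^{r}\Gamma(r)/\Gamma(r-t)$ on top and the corresponding integral below. The only subtlety I anticipate is bookkeeping around the singular/absolutely continuous split of $\mathbb{P}_{\zeta_t}$ and justifying the two separate identifications of $h$ on $\{1\}$ and $(0,1)$; beyond that, the argument is a clean Bayes-rule computation and no genuine obstacle is expected.
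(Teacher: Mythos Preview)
Your argument is correct and is essentially the same Bayes-rule computation as the paper's proof: the paper introduces the dominating measure $\mu=\delta_1+dx$, writes the conditional density $q_t(r,x)=\mathbb{I}_{\{x=1\}}\mathbb{I}_{\{r\leq t\}}+\varphi_{\zeta_t^{r}}(x)\mathbb{I}_{\{0<x<1\}}\mathbb{I}_{\{t<r\}}$, and then invokes the standard Bayes formula from Shiryaev rather than verifying the duality characterisation by hand as you do. The mathematical content is identical; your version is simply more self-contained while the paper's is more concise by citation.
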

	\begin{proof}
		Let us consider the measure $\mu$ defined on $\mathcal{B}(\mathbb{R})$ by
		$$ \mu(dx)=\delta_1(dx)+dx,$$ 	
		where $\delta_1(dx)$ and $dx$ are the Dirac measure and the Lebesgue measure on $\mathcal{B}(\mathbb{R})$ respectively. Then for any $B\in \mathcal{B}(\mathbb{R})$ we have   
		$$\mathbb{P}(\zeta_{t}\in B|\tau =r)=\mathbb{P}(\zeta_{t}^{r}\in B)=\dint_{B}q_{t}(r,x)\mu(dx),$$
			where the function $q_{t}$ is a nonnegative and measurable in the two variables jointly given by $$q_{t}(r,x)=\mathbb{I}_{\{x=1 \}}\mathbb{I}_{\{r\leq t \}}+\varphi_{\zeta_{t}^{r}}(x)\mathbb{I}_{\{0< x< 1 \}}\mathbb{I}_{\{t<r \}}.$$ It follows from Bayes formula (see  \cite{S}  p. 272)
		that $\mathbb{P}$-a.s.:
	
		\begin{align*}\mathbb{E}[g(\tau)|\zeta_{t}] & =\frac{\dint_{(0,+\infty)}g(r)q_{t}(r,\zeta_{t})\mathbb{P}_{\tau}(dr)}{\dint_{(0,+\infty)}q_{t}(r,\zeta_{t})\mathbb{P}_{\tau}(dr)}\\
 & =\frac{\dint_{(0,t]}g(r)\mathbb{P}_{\tau}(dr)\mathbb{I}_{\{\zeta_{t}=1\}}+\dint_{(t,+\infty)}g(r)\varphi_{\zeta_{t}^{r}}(\zeta_{t})\mathbb{P}_{\tau}(dr)\mathbb{I}_{\{0<\zeta_{t}<1\}}}{F(t)\mathbb{I}_{\{\zeta_{t}=1\}}+\dint_{(t,+\infty)}\varphi_{\zeta_{t}^{r}}(\zeta_{t})\mathbb{P}_{\tau}(dr)\mathbb{I}_{\{0<\zeta_{t}<1\}}}\\
 & =\dint_{(0,t]}\frac{g(r)}{F(t)}\mathbb{P}_{\tau}(dr)\mathbb{I}_{\{\zeta_{t}=1\}}+\dint_{(t,+\infty)}g(r)\phi_{\zeta_{t}^{r}}(\zeta_{t})\mathbb{P}_{\tau}(dr)\mathbb{I}_{\{0<\zeta_{t}<1\}}.
\end{align*}

			\end{proof}	
			\begin{corollary}
				The conditional law of the random time $\tau$ given $\zeta_{t}$ is given by 
				\begin{equation}\label{taucondlawmulti}
				\mathbb{P}_{\tau\vert\zeta_{t}=x}(x,dr)= \dfrac{1}{F(t)}\,\mathbb{I}_{\{x=1\}}\,\mathbb{I}_{(0,t]}(r)\,\mathbb{P}_{\tau}(dr) +\phi_{\zeta_{t}^{r}}(x)\,\mathbb{I}_{\{0<x< 1\}}\,\mathbb{I}_{(t,+\infty)}(r)\,\mathbb{P}_{\tau}(dr)
				\end{equation}
			\end{corollary}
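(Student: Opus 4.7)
The plan is to read off the regular conditional law directly from the Bayes-type identity \eqref{taucondxi} in Proposition \ref{propbayesestimatejusquatn}. Since the proposition already provides $\mathbb{E}[g(\tau)\vert\zeta_t]$ for every Borel function $g$ with $\mathbb{E}[|g(\tau)|]<+\infty$, the corollary reduces to identifying the kernel displayed in \eqref{taucondlawmulti} as the one governing these conditional expectations.

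More precisely, first I would apply \eqref{taucondxi} to the indicator $g = \mathbb{I}_A$ for an arbitrary $A\in\mathcal{B}((0,+\infty))$. This gives
\[
\mathbb{P}(\tau\in A\vert \zeta_t) \;=\; \int_{(0,t]\cap A}\frac{1}{F(t)}\,\mathbb{P}_\tau(dr)\,\mathbb{I}_{\{\zeta_t=1\}} \;+\;\int_{(t,+\infty)\cap A}\phi_{\zeta_t^{r}}(\zeta_t)\,\mathbb{P}_\tau(dr)\,\mathbb{I}_{\{0<\zeta_t<1\}},
\]
which is exactly $\int_A \mathbb{P}_{\tau\vert\zeta_t=x}(\zeta_t,dr)$ evaluated with $x=\zeta_t$, using the kernel defined in \eqref{taucondlawmulti}. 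Second, I would verify that the right-hand side of \eqref{taucondlawmulti} is a bona fide probability transition kernel: the $(r,x)$-measurability follows from the joint measurability of $\phi_{\zeta_t^{r}}(x)$ in $(r,x)$, and the total mass equals $1$ for $\mathbb{P}_{\zeta_t}$-a.e. $x$, since when $x=1$ the first term integrates to $F(t)/F(t)=1$, while for $0<x<1$ the definition \eqref{equationphi} of $\phi_{\zeta_t^{r}}$ makes the second term integrate to $1$ by construction.

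Finally, combining these two facts with the fact that $\mathbb{P}(\zeta_t\in\{0\})=0$ for $t>0$ (indeed $\zeta_t=0$ would force $\gamma_{t\wedge\tau}=0$, which has probability zero conditionally on $\tau>0$), one concludes that the kernel in \eqref{taucondlawmulti} is a version of the regular conditional distribution of $\tau$ given $\zeta_t$. Since the Bayes formula identity in Proposition \ref{propbayesestimatejusquatn} is stated $\mathbb{P}$-a.s. for every bounded Borel $g$, a monotone-class argument ensures that the kernel constructed above characterizes the conditional law and not merely its integrals against a countable family of test functions.

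I do not expect any serious obstacle: the corollary is essentially a reformulation of Proposition \ref{propbayesestimatejusquatn} in terms of a transition kernel. The only mild point to be careful about is the treatment of the event $\{\zeta_t=1\}$, where the density representation breaks down and one must instead normalize by $F(t)$; this is already handled in the statement of the proposition via the indicator $\mathbb{I}_{\{\zeta_t=1\}}$.
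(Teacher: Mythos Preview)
Your proposal is correct and matches the paper's approach: the paper states this corollary immediately after Proposition~\ref{propbayesestimatejusquatn} with no separate proof, treating it as a direct restatement of \eqref{taucondxi} in kernel form. Your additional verification that the kernel is a genuine probability transition kernel is a welcome elaboration but not something the paper spells out.
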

		The previous proposition can be expanded as follows
				\begin{proposition}\label{ncordbayesest}
					Let $u>t>0$ such that $F(t)>0$.  Let $\mathfrak{g}$ be a bounded measurable function defined on $(0,+\infty)\times \mathbb{R}$. Then, $\mathbb{P}$-a.s., we have		
					
						\begin{equation}\label{tauximultcond}
						\mathbb{E}[\mathfrak{g}(\tau,\zeta_{t})\vert\zeta_{t}]=\dint_{(0,t]}\,\dfrac{\mathfrak{g}(r,1)}{F(t)}\,\mathbb{P}_{\tau}(dr)\,\mathbb{I}_{\{\zeta_{t}=1\}}+\dint_{(t,+\infty)}\,\mathfrak{g}(r,\zeta_{t})\phi_{\zeta_{t}^{r}}(\zeta_{t})\,\mathbb{P}_{\tau}(dr)\,\mathbb{I}_{\{0<\zeta_{t}<1\}},
						\end{equation}
	and						
						\begin{align} \mathbb{E}[\mathfrak{g}(\tau,\zeta_{u})|\zeta_{t}]= & \dint_{(0,t]}\frac{\mathfrak{g}(r,1)}{F(t)}\mathbb{P}_{\tau}(dr)\mathbb{I}_{\{\zeta_{t}=1\}}+\dint_{(t,u]}\mathfrak{g}(r,1)\phi_{\zeta_{t}^{r}}(\zeta_{t})\mathbb{P}_{\tau}(dr)\mathbb{I}_{\{0<\zeta_{t}<1\}}\nonumber\\
\nonumber\\
+ & \dint_{(u,+\infty)}\,\mathfrak{G}_{t,u}(r,\zeta_{t})\phi_{\zeta_{t}^{r}}(\zeta_{t})\,\mathbb{P}_{\tau}(dr)\,\mathbb{I}_{\{0<\zeta_{t}<1\}}.\label{equationbeyesextensionxi_{u},t_{n}<u}
\end{align}
Here the function $\mathfrak{G}_{t,u}(r,\cdot)$ is defined by 
					\begin{eqnarray}
\mathfrak{G}_{t,u}(r,x) & := & \mathbb{E}[\mathfrak{g}(r,\zeta_{u}^{r})|\zeta_{t}^{r}=x]\nonumber\\
 & = & \dint_{\mathbb{R}}\,\mathfrak{g}(r,y)\,\mathbb{P}_{\zeta_{u}^{r}|\zeta_{t}^{r}=x}(dy).\label{Gturx}
\end{eqnarray}
				\end{proposition}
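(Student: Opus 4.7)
The plan is to derive both identities by combining Proposition \ref{propbayesestimatejusquatn} (which gives the Bayes-type conditional law of $\tau$ given $\zeta_t$) with case analysis on where $\tau$ sits relative to $t$ and $u$. The underlying idea is that, by construction and Assumption \ref{hyindependent}, conditionally on $\tau = r$ the process $\zeta$ coincides with the deterministic-length bridge $\zeta^r$, so all joint conditional distributions reduce to known ones after we unpack the location of $\tau$.

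For \eqref{tauximultcond}, I would start from the fact that $\zeta_t$ is $\sigma(\zeta_t)$-measurable, so I can treat it as a parameter and apply Proposition \ref{propbayesestimatejusquatn} with the Borel function $g_x(r) := \mathfrak{g}(r,x)$ evaluated at $x = \zeta_t$. Equivalently, I integrate $\mathfrak{g}(r,\zeta_t)$ against the conditional law \eqref{taucondlawmulti}. On the event $\{\zeta_t = 1\}$ that law is supported on $(0,t]$, and moreover $\mathfrak{g}(r,\zeta_t) = \mathfrak{g}(r,1)$ there, producing the first term of \eqref{tauximultcond}. On the event $\{0 < \zeta_t < 1\}$ the conditional law of $\tau$ is supported on $(t,+\infty)$ with density $\phi_{\zeta_t^r}(\zeta_t)$ against $\mathbb{P}_\tau(dr)$, which gives the second term.

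For \eqref{equationbeyesextensionxi_{u},t_{n}<u}, I use the tower property and condition first on $\sigma(\zeta_t) \vee \sigma(\tau)$. Given $\tau = r$, $\zeta$ is the deterministic-length bridge $\zeta^r$, so I split three cases based on the value of $r$: if $r \leq t$, then $\zeta_u = \zeta_t = 1$ and the inner conditional expectation is $\mathfrak{g}(r,1)$; if $t < r \leq u$, then $\zeta_u = 1$ while $\zeta_t \in (0,1)$, and the inner expectation is again $\mathfrak{g}(r,1)$; if $r > u$, then I apply the Markov property of $\zeta^r$ from Proposition \ref{lemmaN^rmartingale}(i), which yields the inner expectation $\mathfrak{G}_{t,u}(r,\zeta_t)$ as defined in \eqref{Gturx}. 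Integrating these three pieces against the conditional law \eqref{taucondlawmulti} of $\tau$ given $\zeta_t$, piece by piece, produces exactly the three integrals in the statement.

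The main technical point to handle carefully is the third case: I need the Markov property for $\zeta^r$ to descend from a deterministic $r$ to the random time $\tau$. This is where Assumption \ref{hyindependent} does the work, since $\tau \perp\!\!\!\perp \gamma$ allows one to disintegrate $\mathbb{P}$ via $\mathbb{P}_\tau$ and apply the deterministic Markov statement on each slice $\{\tau = r\}$ before reassembling. Everything else is case splitting on $\{\zeta_t = 1\}$ versus $\{0 < \zeta_t < 1\}$ (up to $\mathbb{P}$-null sets) and matching terms with the conditional law already identified in \eqref{taucondlawmulti}; no new estimates or measurability arguments beyond those already in the previous proposition are needed.
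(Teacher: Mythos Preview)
Your proposal is correct and follows essentially the same route as the paper: \eqref{tauximultcond} is reduced to Proposition~\ref{propbayesestimatejusquatn}, and \eqref{equationbeyesextensionxi_{u},t_{n}<u} is obtained by splitting into the three regimes $\tau\le t$, $t<\tau\le u$, $u<\tau$, handling the first two directly and the third via the Markov property of $\zeta^{r}$ together with the independence assumption, then feeding the resulting function of $(\tau,\zeta_t)$ back through \eqref{tauximultcond}. The only cosmetic difference is that for the third regime the paper verifies $\mathbb{E}[\mathfrak{g}(\tau,\zeta_{u})\mathbb{I}_{\{u<\tau\}}\mid\zeta_{t}] = \mathbb{E}[\mathfrak{G}_{t,u}(\tau,\zeta_{t})\mathbb{I}_{\{u<\tau\}}\mid\zeta_{t}]$ by testing against bounded Borel $h(\zeta_t)$ and disintegrating over $\mathbb{P}_\tau$, whereas you phrase the same step as a tower property through $\sigma(\zeta_t)\vee\sigma(\tau)$.
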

				\begin{proof}
					First of all, it is easy to see that \eqref{tauximultcond} is an immediate consequence of Proposition \ref{propbayesestimatejusquatn}.
					Now	to show \eqref{equationbeyesextensionxi_{u},t_{n}<u}  we begin with by splitting $\mathbb{E}[\mathfrak{g}(\tau,\zeta_{u})\vert\zeta_{t}]$ as follows
						\[
						\mathbb{E}[\mathfrak{g}(\tau,\zeta_{u})\vert\zeta_{t}]=\mathbb{E}[\mathfrak{g}(\tau,1)\mathbb{I}_{\{\tau\leq t\}}\vert\zeta_{t}]+\mathbb{E}[\mathfrak{g}(\tau,1)\mathbb{I}_{\{t<\tau\leq u\}}\vert\zeta_{t}]+\mathbb{E}[\mathfrak{g}(\tau,\zeta_{u})\mathbb{I}_{\{u<\tau\}}\vert\zeta_{t}]
						\]
						We obtain from Proposition \ref{propbayesestimatejusquatn} that
						\[
						\mathbb{E}[\mathfrak{g}(\tau,1)\mathbb{I}_{\{\tau\leq t\}}\vert\zeta_{t}]=\dint_{(0,t]}\frac{\mathfrak{g}(r,1)}{F(t)}\mathbb{P}_{\tau}(dr)\mathbb{I}_{\{\zeta_{t}=1\}}
						\]
						and
						\[
			\mathbb{E}[\mathfrak{g}(\tau,1)\mathbb{I}_{\{t<\tau\leq u\}}\vert\zeta_{t}]=\dint_{(t,u]}\,\mathfrak{g}(r,1)\phi_{\zeta_{t}^{r}}(\zeta_{t})\mathbb{P}_{\tau}(dr) \,\mathbb{I}_{\{0<\zeta_{t}<1\}}.
					\]
					Next we prove that 
					\begin{equation}
					\mathbb{E}[\mathfrak{g}(\tau,\zeta_{u})\mathbb{I}_{\{u<\tau\}}|\zeta_{t}]=\dint_{(u,+\infty)}\,\mathfrak{G}_{t,u}(r,\zeta_{t})\phi_{\zeta_{t}^{r}}(\zeta_{t})\,\mathbb{P}_{\tau}(dr)\,\mathbb{I}_{\{0<\zeta_{t}<1\}}.	\label{condeqtauxi}
					\end{equation}
					Indeed, for a bounded Borel function $h$ we have  
					\begin{align*}\mathbb{E}[\mathfrak{g}(\tau,\zeta_{u})\mathbb{I}_{\left\{ u<\tau\right\} }h(\zeta_{t})] & =\dint_{(u,+\infty)}E[\mathfrak{g}(r,\zeta_{u}^{r})h(\zeta_{t}^{r})]\mathbb{P}_{\tau}(dr)\\
 & =\dint_{(u,+\infty)}\mathbb{E}[\mathbb{E}[\mathfrak{g}(r,\zeta_{u}^{r})h(\zeta_{t}^{r})|\zeta_{t}^{r}]]\mathbb{P}_{\tau}(dr)\\
 & =\dint_{(u,+\infty)}\mathbb{E}[\mathbb{E}[\mathfrak{g}(r,\zeta_{u}^{r})|\zeta_{t}^{r}]h(\zeta_{t}^{r})]\mathbb{P}_{\tau}(dr).
\end{align*}
					Using \eqref{Gturx}, for $t<u<r$, we get 
					\[
					\begin{array}{lll}
\mathbb{E}[\mathfrak{g}(\tau,\zeta_{u})\mathbb{I}_{\left\{ u<\tau\right\} }h(\zeta_{t})] & = & \dint_{(u,+\infty)}\mathbb{E}[\mathfrak{G}_{t,u}(r,\zeta_{t}^{r})h(\zeta_{t}^{r})]\mathbb{P}_{\tau}(dr)\\
\\
 & = & \mathbb{E}[\mathfrak{G}_{t,u}(\tau,\zeta_{t})\mathbb{I}_{\{u<\tau\}}h(\zeta_{t})].
\end{array}
					\]
					It follows from \eqref{tauximultcond}, that $\mathbb{P}$-a.s.
					
					\begin{align}\mathbb{E}[\mathfrak{G}_{t,u}(\tau,\zeta_{t})\mathbb{I}_{\{u<\tau\}}\vert\zeta_{t}]= & \dint_{(u,+\infty)}\,\mathfrak{G}_{t,u}(r,\zeta_{t})\,\phi_{\zeta_{t}^{r}}(\zeta_{t})\,\mathbb{P}_{\tau}(dr)\,\mathbb{I}_{\{0<\zeta_{t}<1\}}.\end{align}
					This induces that
					\[
					\begin{array}{l}
\mathbb{E}[\mathfrak{g}(\tau,\zeta_{u})\mathbb{I}_{\{u<\tau\}}h(\zeta_{t})]=\mathbb{E}\left[\dint_{(u,+\infty)}\,\mathfrak{G}_{t,u}(r,\zeta_{t})\phi_{\zeta_{t}^{r}}(\zeta_{t})\,\mathbb{P}_{\tau}(dr)\,\mathbb{I}_{\{0<\zeta_{t}<1\}}\,h(\zeta_{t})\right].\end{array}
\]
					Hence the formula \eqref{condeqtauxi} is proved and then the proof of the proposition is completed.
			
			\end{proof}

	\subsection{Markov property of $\zeta$ and Bayes estimate of $\tau$}
	In this part we prove that the gamma bridge with random length $\zeta$ is an inhomogeneous Markov process with respect to its completed natural filtration $\mathbb{F}^{\zeta,c}$.
	\begin{theorem}
		The process $(\zeta_t, t\geq 0)$ is an $\mathbb{F}^{\zeta}$-Markov process. That is, for any $t\geq 0$, we have
		\begin{equation}
		\mathbb{E}[f(\zeta_{t+h})\vert\mathcal{F}_t^{\zeta}]=\mathbb{E}[f(\zeta_{t+h})\vert\zeta_t], \mathbb{P}-a.s.,
		\end{equation}
		for all $t,h\geq 0$ and for every bounded measurable function $f$.
	\end{theorem}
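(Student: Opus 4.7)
The plan is to verify the Markov property via the Blumenthal-Getoor criterion already used in Proposition \ref{lemmaN^rmartingale}(i): it suffices to show that for every $n\geq 1$, every chain $0\leq t_1<\cdots<t_n=t<t+h=u$, and every bounded measurable $f$,
\[
\mathbb{E}[f(\zeta_u)\vert\zeta_{t_1},\ldots,\zeta_{t_n}]=\mathbb{E}[f(\zeta_u)\vert\zeta_{t_n}]\quad\mathbb{P}\text{-a.s.}
\]
The strategy is to condition on $\tau$, exploiting Assumption \ref{hyindependent} together with the Markov property of the deterministic-length bridge $\zeta^r$ from Proposition \ref{lemmaN^rmartingale}(i), in the same spirit as Proposition \ref{ncordbayesest}.

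First I would split on the events $\{\zeta_{t_n}=1\}$ and $\{\zeta_{t_n}<1\}$, which coincide $\mathbb{P}$-a.s.\ with $\{\tau\leq t_n\}$ and $\{\tau>t_n\}$. On the first event $\zeta$ is absorbed at $1$, so $\zeta_u=1$ and both conditional expectations reduce to $f(1)$. On $\{\zeta_{t_n}<1\}$ I would write
\[
f(\zeta_u)=f(1)\,\mathbb{I}_{\{\tau\leq u\}}+f(\zeta_u^{\tau})\,\mathbb{I}_{\{\tau>u\}},
\]
and, for $r>u$, set $F_r(x):=\mathbb{E}[f(\zeta_u^{r})\vert\zeta_{t_n}^{r}=x]$. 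Conditioning on $\tau$, using independence, and applying the Markov property of $\zeta^r$ then give, for every bounded $\mathcal{F}_{t_n}^{\zeta}$-measurable $\Phi$,
\[
\mathbb{E}[f(\zeta_u)\Phi]=\mathbb{E}\bigl[\bigl(f(1)\mathbb{I}_{\{\tau\leq u\}}+F_\tau(\zeta_{t_n})\mathbb{I}_{\{\tau>u\}}\bigr)\Phi\bigr],
\]
so that the problem reduces to proving that on $\{\zeta_{t_n}<1\}$ the regular conditional law of $\tau$ given $\mathcal{F}_{t_n}^{\zeta}$ coincides with the one given only $\zeta_{t_n}$, already identified in Proposition \ref{propbayesestimatejusquatn}.

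The key step is a Bayes computation. For $r>t_n$, the Markov property of $\zeta^r$ combined with \eqref{equationbridgedensity} and \eqref{equationtransitionlaw} gives the joint density of $(\zeta_{t_1}^{r},\ldots,\zeta_{t_n}^{r})$,
\[
q^r(x_1,\ldots,x_n)=\varphi_{\zeta_{t_1}^{r}}(x_1)\prod_{i=2}^{n}p_{t_{i-1},t_i}^{r}(x_{i-1},x_i),
\]
and a telescoping of the ratios $\Gamma(r-t_{i-1})/\Gamma(r-t_i)$ against the powers $(1-x_i)^{r-t_i-1}$ yields the factorization $q^r(x_1,\ldots,x_n)=\varphi_{\zeta_{t_n}^{r}}(x_n)\,K(x_1,\ldots,x_n)$, with $K$ independent of $r$. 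Inserting this into the Bayes formula exactly as in Proposition \ref{propbayesestimatejusquatn}, the factor $K$ cancels between numerator and denominator, leaving $\phi_{\zeta_{t_n}^{r}}(\zeta_{t_n})\,\mathbb{P}_\tau(dr)\,\mathbb{I}_{\{r>t_n\}}$, which is precisely $\mathbb{P}(\tau\in dr\vert\zeta_{t_n})$. Combining this identity with the decomposition above closes the argument. The main obstacle I anticipate is performing the telescoping factorization of $q^r$ cleanly and treating the boundary case $\zeta_{t_n}=1$; no conceptually new ingredient beyond what already underlies Proposition \ref{ncordbayesest} should be required.
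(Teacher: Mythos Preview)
Your argument is correct, but it takes a different route from the paper's proof. The paper does not compute the joint density $q^r(x_1,\ldots,x_n)$ or perform the telescoping factorization. Instead it changes variables from $(\zeta_{t_1},\ldots,\zeta_{t_n})$ to $(\zeta_{t_n},\alpha_n,\ldots,\alpha_1)$ with $\alpha_k=\zeta_{t_{k-1}}/\zeta_{t_k}$, observes that on $\{t_n<\tau\}$ one has $\alpha_k=\beta_k:=\gamma_{t_{k-1}}/\gamma_{t_k}$, and then invokes Proposition~\ref{propstrongpropertiesofgamma}(i) directly: the vector $(\beta_1,\ldots,\beta_n)$ is independent of $(\zeta_{t_n}^r,\zeta_{t+h}^r)$ for every $r>t_n$, so it factors out of the integral $\int_{(t_n,\infty)}\mathbb{E}[\,\cdot\,]\,\mathbb{P}_\tau(dr)$ and the whole computation collapses without ever writing down a density or a Bayes ratio.

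The two approaches express the same phenomenon---the conditional law of the past of $\zeta^r$ given $\zeta_{t_n}^r$ does not depend on $r$---but the paper's ratio trick is shorter and more structural, while your density factorization is more explicit and closer in spirit to Proposition~\ref{propbayesestimatejusquatn}. One small point: when you write ``the regular conditional law of $\tau$ given $\mathcal{F}_{t_n}^{\zeta}$'' you really mean given $(\zeta_{t_1},\ldots,\zeta_{t_n})$, since you are working inside the Blumenthal--Getoor criterion; this is harmless but worth stating precisely.
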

	\begin{proof}
 	First, we would like to mention that since $\zeta_0=0$ almost surely then it is easy to see that
 	$$ \mathbb{E}[f(\zeta_{t+h})\vert\mathcal{F}_0^{\zeta}]=\mathbb{E}[f(\zeta_{t+h})\vert\zeta_0]. $$
 	Let us assume $t>0$. As $\mathbb{I}_{\{\zeta_t=0\}}=\mathbb{I}_{\{\tau\leq t\}}$ $\mathbb{P}$-a.s  we rewrite $\mathbb{E}[f(\zeta_{t+h})\vert\mathcal{F}_t^{\zeta}]$ as follows
 	\begin{align*}\mathbb{E}[f(\zeta_{t+h})\vert\mathcal{F}_{t}^{\zeta}] & =\mathbb{E}[f(\zeta_{t+h})\vert\mathcal{F}_{t}^{\zeta}]\mathbb{I}_{\{\tau\leq t\}}+\mathbb{E}[f(\zeta_{t+h})\vert\mathcal{F}_{t}^{\zeta}]\mathbb{I}_{\{t<\tau\}}\\
 	\\
 & =f(1)\mathbb{I}_{\{\zeta_{t}=1\}}+\mathbb{E}[f(\zeta_{t+h})\vert\mathcal{F}_{t}^{\zeta}]\mathbb{I}_{\{t<\tau\}}.
\end{align*}
So it remains to show that
		\[
		\mathbb{E}[f(\zeta_{t+h})\mathbb{I}_{\{t<\tau\}}\vert\mathcal{F}_{t}^{\zeta}]=\mathbb{E}[f(\zeta_{t+h})\mathbb{I}_{\{t<\tau\}}\vert\zeta_{t}],\,\,\mathbb{P}-a.s.
		\]
		To do this it is enough to verify that 
		\begin{equation}
		\int_{A\cap\{t<\tau\}}f(\zeta_{t+h})d\mathbb{P}=\int_{A\cap\{t<\tau\}}\mathbb{E}[f(\zeta_{t+h})\vert\zeta_{t}]d\mathbb{P}, \label{equationAcapt<tau}
		\end{equation}
		for all $A\in \mathcal{F}_t^{\zeta}$. We start by remarking that, for $t>0$, $\mathcal{F}^{\zeta}_t$ is generated by 
		\[
		\zeta_{t_n},\alpha_n:=\dfrac{\zeta_{t_{n-1}}}{\zeta_{t_n}},\alpha_{n-1}=\dfrac{\zeta_{t_{n-2}}}{\zeta_{t_{n-1}}},\ldots,\alpha_{2}=\dfrac{\zeta_{t_{1}}}{\zeta_{t_{2}}},\alpha_{1}:=\dfrac{\zeta_{t_{0}}}{\zeta_{t_{1}}},
		\]
		$0<t_{0}<t_{1}<\cdots<t_{n}=t$ for n running through $\mathbb{N}$. Then by the monotone class theorem it is sufficient to prove \eqref{equationAcapt<tau} for sets
		$A$ of the form $A=\{\zeta^z_t\in B,\alpha_1\in B_1,\ldots,\alpha_n\in B_n \}$ with $B, B_1, B_2,\ldots, B_n \in
		\mathcal{B}(\mathbb{R})$, $n \geq 1$.
		Moreover, on the set $\{t<\tau\}$, we have 
		\[
		\beta_k:=\dfrac{\gamma_{t_{k-1}}}{\gamma_{t_{k}}}=\alpha_k,\;k=1,\ldots,n.
		\]	
		Using  Proposition \ref{propstrongpropertiesofgamma} (i), then for $t<r$ the vectors  
		  $(\beta_1,\ldots,\beta_n)$ and $(\zeta_t^{r},\zeta_{t+h}^{r})$ are independent.
		 Now taking into account all the above considerations, we have 
\begin{align*}\dint_{A\cap\{t<\tau\}}f(\zeta_{t+h})d\mathbb{P} & =\mathbb{E}\left[f(\zeta_{t+h})\,\mathbb{I}_{B\times B_{1}\times\ldots\times B_{n}}(\zeta_{t},\alpha_{1},\ldots,\alpha_{n})\,\mathbb{I}_{\{t<\tau\}}\right]\\
\\
 & =\mathbb{E}[f(\zeta_{t+h})\,\mathbb{I}_{B\times B_{1}\times\ldots\times B_{n}}(\zeta_{t},\beta_{1},\ldots,\beta_{n})\,\mathbb{I}_{\{t<\tau\}}]\\
\\
 & =\int_{(t,\infty)}\mathbb{E}\left[f(\zeta_{t+h}^{r})\,\mathbb{I}_{B}(\zeta_{t}^{r})\,\mathbb{I}_{B_{1}\times\ldots\times B_{n}}(\beta_{1},\ldots,\beta_{n})\right]\,\mathbb{P}_{\tau}(dr)\\
\\
 & =\int_{(t,\infty)}\mathbb{E}[f(\zeta_{t+h}^{r})\,\mathbb{I}_{B}(\zeta_{t}^{r})]\mathbb{P}_{\tau}(dr)\,\mathbb{E}[\mathbb{I}_{B_{1}\times\ldots\times B_{n}}(\beta_{1},\ldots,\beta_{n})]\\
\\
 & =\mathbb{E}[f(\zeta_{t+h})\,\mathbb{I}_{B}(\zeta_{t})\,\mathbb{I}_{\{t<\tau\}}]\,\mathbb{E}[\mathbb{I}_{B_{1}\times\ldots\times B_{n}}(\beta_{1},\ldots,\beta_{n})]\\
\\
 & =\mathbb{E}\left[\mathbb{E}[f(\zeta_{t+h})\vert\zeta_{t}]\,\mathbb{I}_{B}(\zeta_{t})\,\mathbb{I}_{\{t<\tau\}}]\,\mathbb{E}[\mathbb{I}_{B_{1}\times\ldots\times B_{n}}(\beta_{1},\ldots,\beta_{n})\right]\\
\\
 & =\mathbb{E}[\mathbb{E}[f(\zeta_{t+h})\vert\zeta_{t}]\,\mathbb{I}_{B}(\zeta_{t})\,\mathbb{I}_{\{t<\tau\}}\,\mathbb{I}_{B_{1}\times\ldots\times B_{n}}(\beta_{1},\ldots,\beta_{n})]\\
\\
 & =\mathbb{E}[\mathbb{E}[f(\zeta_{t+h})\vert\zeta_{t}]\,\mathbb{I}_{B}(\zeta_{t})\,\mathbb{I}_{\{t<\tau\}}\,\mathbb{I}_{B_{1}\times\ldots\times B_{n}}(\alpha_{1},\ldots,\alpha_{n})]\\
\\
 & =\mathbb{E}[\mathbb{E}[f(\zeta_{t+h})\vert\zeta_{t}]\,\mathbb{I}_{B\times B_{1}\times\ldots\times B_{n}}(\zeta_{t},\alpha_{1},\ldots,\alpha_{n})\,\mathbb{I}_{\{t<\tau\}}]\\
\\
 & =\int_{A\cap\{t<\tau\}}\mathbb{E}[f(\zeta_{t+h})\vert\zeta_{t}]\,d\mathbb{P}.
\end{align*}
Hence \eqref{equationAcapt<tau} is proved and this ends the proof.

	\end{proof}

	\begin{corollary}
	The Markov property can be extended to the completed filtration  $\mathbb{F}^{\zeta,c}$.
	\end{corollary}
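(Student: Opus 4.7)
The plan is to reduce the claim to the theorem just proved by exploiting the standard fact that conditioning on a $\sigma$-algebra is unchanged after completion by $\mathbb{P}$-null sets. Concretely, I would show that for every bounded measurable $f$ and every $t,h\geq 0$,
\[
\mathbb{E}[f(\zeta_{t+h})\vert\mathcal{F}^{\zeta,c}_t]=\mathbb{E}[f(\zeta_{t+h})\vert\mathcal{F}^{\zeta}_t]\quad\mathbb{P}\text{-a.s.},
\]
from which the corollary follows immediately since the right-hand side equals $\mathbb{E}[f(\zeta_{t+h})\vert\zeta_t]$ by the preceding theorem.

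First I would recall the auxiliary lemma: if $\mathcal{G}\subseteq\mathcal{H}\subseteq\mathcal{F}$ with $\mathcal{H}=\mathcal{G}\vee\mathcal{N}_{\mathbb{P}}$, then for every integrable $X$, $\mathbb{E}[X\vert\mathcal{H}]=\mathbb{E}[X\vert\mathcal{G}]$ $\mathbb{P}$-a.s. This is standard and follows from the fact that any $A\in\mathcal{H}$ can be written as $A=A'\cup N$ with $A'\in\mathcal{G}$ and $N\in\mathcal{N}_{\mathbb{P}}$, so $\mathbb{E}[X\,\mathbb{I}_A]=\mathbb{E}[X\,\mathbb{I}_{A'}]=\mathbb{E}[\mathbb{E}[X\vert\mathcal{G}]\,\mathbb{I}_{A'}]=\mathbb{E}[\mathbb{E}[X\vert\mathcal{G}]\,\mathbb{I}_A]$, and $\mathbb{E}[X\vert\mathcal{G}]$ is already $\mathcal{H}$-measurable.

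Applying this with $\mathcal{G}=\mathcal{F}^{\zeta}_t$, $\mathcal{H}=\mathcal{F}^{\zeta,c}_t$ and $X=f(\zeta_{t+h})$ yields the displayed equality, and combining with the Markov property already established on $\mathbb{F}^{\zeta}$ gives
\[
\mathbb{E}[f(\zeta_{t+h})\vert\mathcal{F}^{\zeta,c}_t]=\mathbb{E}[f(\zeta_{t+h})\vert\zeta_t]\quad\mathbb{P}\text{-a.s.}
\]
There is no real obstacle here: the only point that needs any care is to observe that $\mathbb{E}[f(\zeta_{t+h})\vert\zeta_t]$, being $\mathcal{F}^{\zeta}_t$-measurable, is a fortiori $\mathcal{F}^{\zeta,c}_t$-measurable, so it is a legitimate version of the conditional expectation with respect to the completed filtration. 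The argument is purely formal and does not use any specific structure of the gamma bridge beyond the theorem.
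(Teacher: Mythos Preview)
Your argument is correct and is precisely the standard justification the paper has in mind: the corollary is stated in the paper without proof, as the passage from $\mathbb{F}^{\zeta}$ to its completion $\mathbb{F}^{\zeta,c}$ is routine. One small inaccuracy in your write-up: a general $A\in\mathcal{G}\vee\mathcal{N}_{\mathbb{P}}$ need not be of the form $A'\cup N$ with $A'\in\mathcal{G}$; the correct statement is that there exists $A'\in\mathcal{G}$ with $A\Delta A'\in\mathcal{N}_{\mathbb{P}}$, but this still gives $\mathbb{E}[X\,\mathbb{I}_A]=\mathbb{E}[X\,\mathbb{I}_{A'}]$ and the rest of your proof goes through unchanged.
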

	The aim of this proposition is to provide, using the Markov property, that the observation of $\zeta_{t}$ would be sufficient to give estimates of the time $\tau$ based on the observation of the information process $\zeta$ up to time $t$. 
	 \begin{proposition}\label{propbayesestimate}
	 	Let $0<t<u$.\\
	 	\begin{enumerate}
	 		\item[(i)] For each bounded measurable function $g$ defined on $(0, \infty)$, we have $\mathbb{P}$-a.s.
	 		\begin{equation}
	 		\mathbb{E}[g(\tau)\vert\mathcal{F}_{t}^{\zeta,c}]=g(\tau\wedge t)\mathbb{I}_{\{\zeta_{t}=1\}}+\dint_{(t,+\infty)}g(r)\phi_{\zeta_{t}^{r}}(\zeta_{t})\,\mathbb{P}_{\tau}(dr)\,\mathbb{I}_{\{0<\zeta_{t}<1\}}.\label{equationtaugivenF}
	 		\end{equation}
	 		\item[(ii)] For each bounded measurable function defined on $(0,+\infty)\times \mathbb{R}$, we have $\mathbb{P}$-a.s.
	 		\begin{equation}
	 		\mathbb{E}[g(\tau,\zeta_{t})\vert \mathcal{F}_{t}^{\zeta,c}]=g(\tau\wedge t,1)\mathbb{I}_{\{\zeta_{t}=1\}}+\dint_{(t,+\infty)}g(r,\zeta_{t})\phi_{\zeta_{t}^{r}}(\zeta_{t})\mathbb{P}_{\tau}(dr)\mathbb{I}_{\{0<\zeta_{t}<1\}}.
\label{equationtauzeta_tgivenF}
	 		\end{equation}

	 		\begin{eqnarray}
\mathbb{E}[g(\tau,\zeta_{u})\vert \mathcal{F}_{t}^{\zeta,c}] & = & g(\tau\wedge t,1)\mathbb{I}_{\{\zeta_{t}=1\}}+\dint_{(t,u]}g(r,1)\phi_{\zeta_{t}^{r}}(\zeta_{t})\mathbb{P}_{\tau}(dr)\mathbb{I}_{\{0<\zeta_{t}<1\}}
\nonumber
\\ \nonumber
\\
 &  & +\dint_{(u,+\infty)}\int_{\mathbb{R}}g(r,y)\mathbb{P}_{\zeta_{u}^{r}\vert \zeta_{t}^{r}=x}(dy)\phi_{\zeta_{t}^{r}}(\zeta_{t})\mathbb{P}_{\tau}(dr)\mathbb{I}_{\{0<\zeta_{t}<1\}}. \label{equationtauzetaugivenF}
\end{eqnarray}
	 		
	 	\end{enumerate}
	 \end{proposition}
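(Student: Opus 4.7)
The strategy is to reduce each conditional expectation given $\mathcal{F}_{t}^{\zeta,c}$ to one given $\sigma(\zeta_{t})\vee\mathcal{N}_{P}$, at which point Propositions \ref{propbayesestimatejusquatn} and \ref{ncordbayesest} supply the explicit formulas on the event $\{0<\zeta_{t}<1\}$, while the stopping time property of $\tau$ handles the event $\{\zeta_{t}=1\}$.

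\textbf{First, splitting on $\{\zeta_{t}=1\}=\{\tau\leq t\}$.} Since $\tau$ is an $\mathbb{F}^{\zeta,c}$-stopping time, the variable $\tau\wedge t$ is $\mathcal{F}_{t}^{\zeta,c}$-measurable and coincides with $\tau$ on $\{\tau\leq t\}$. Writing $g(\tau)=g(\tau\wedge t)\mathbb{I}_{\{\tau\leq t\}}+g(\tau)\mathbb{I}_{\{t<\tau\}}$ and using $\{\tau\leq t\}=\{\zeta_{t}=1\}$ $\mathbb{P}$-a.s., the first summand equals $g(\tau\wedge t)\mathbb{I}_{\{\zeta_{t}=1\}}$, which is already $\mathcal{F}_{t}^{\zeta,c}$-measurable and thus serves as its own conditional expectation. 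Since $\zeta_{s}=1$ for every $s\geq t$ on $\{\tau\leq t\}$, the same decomposition produces the $\mathfrak{g}(\tau\wedge t,1)\mathbb{I}_{\{\zeta_{t}=1\}}$ contributions in both identities of part (ii).

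\textbf{Second, a Markov-type identity on $\{t<\tau\}$.} The crux is the claim that, for every bounded Borel function $h$ on $(0,+\infty)\times\mathbb{R}$ and every $u\geq t$,
\begin{equation*}
\mathbb{E}\bigl[h(\tau,\zeta_{u})\,\mathbb{I}_{\{t<\tau\}}\,\vert\,\mathcal{F}_{t}^{\zeta,c}\bigr]=\mathbb{E}\bigl[h(\tau,\zeta_{u})\,\mathbb{I}_{\{t<\tau\}}\,\vert\,\zeta_{t}\bigr],\quad \mathbb{P}\text{-a.s.}
\end{equation*}
I would prove this by repeating the argument used for the Markov property of $\zeta$. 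By a monotone class argument, it suffices to test the identity on generating sets $A=\{\zeta_{t}\in B,\alpha_{1}\in B_{1},\ldots,\alpha_{n}\in B_{n}\}$, with $\alpha_{k}=\zeta_{t_{k-1}}/\zeta_{t_{k}}$ and $0<t_{0}<\ldots<t_{n}=t$. On $\{t<\tau\}$ one has $\alpha_{k}=\beta_{k}:=\gamma_{t_{k-1}}/\gamma_{t_{k}}$. Conditioning on $\tau=r$ via Assumption \ref{hyindependent} and invoking Proposition \ref{propstrongpropertiesofgamma}(i) to factor the joint expectation of $(\beta_{1},\ldots,\beta_{n})$ from $(\zeta_{t}^{r},\zeta_{u}^{r})$ yields the claim; this is precisely the chain of equalities carried out in the Markov property proof, with $f(\zeta_{t+h}^{r})$ replaced by $h(r,\zeta_{u}^{r})$ inside the integral against $\mathbb{P}_{\tau}(dr)$.

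\textbf{Third, reading off the formulas and the main obstacle.} On $\{t<\tau\}=\{0<\zeta_{t}<1\}$ (a.s.), the identity from the second step lets me invoke the earlier Bayes formulas. Part (i) follows by subtracting the $\{\tau\leq t\}$-contribution of Proposition \ref{propbayesestimatejusquatn} from the full formula there. The first identity of part (ii) is the specialization of \eqref{tauximultcond}, and the second identity follows from the statement of Proposition \ref{ncordbayesest} for $\mathbb{E}[\mathfrak{g}(\tau,\zeta_{u})\vert\zeta_{t}]$, using $\mathfrak{G}_{t,u}(r,x)=\int_{\mathbb{R}}h(r,y)\,\mathbb{P}_{\zeta_{u}^{r}\vert\zeta_{t}^{r}=x}(dy)$. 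The main obstacle is the second step: the Markov property proved earlier only handles functionals of $\zeta_{u}$, so a direct invocation is not possible when the integrand also depends on $\tau$. Still, the structural ingredients, namely the generating system of $\mathcal{F}_{t}^{\zeta}$ via the ratios $\alpha_{k}$, the identification $\alpha_{k}=\beta_{k}$ on $\{t<\tau\}$, Proposition \ref{propstrongpropertiesofgamma}(i), and Assumption \ref{hyindependent}, transfer verbatim once $f(\zeta_{t+h}^{r})$ is replaced by $h(r,\zeta_{u}^{r})$, so the extension is routine and only needs to be written out carefully.
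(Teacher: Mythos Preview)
Your proof is correct, but the paper takes a shorter route at precisely the point you flag as the ``main obstacle''. Instead of re-running the generating-sets argument to extend the Markov identity to functionals of $(\tau,\zeta_{u})$, the paper observes that on $\{t<\tau\}$ the variable $\tau$ is itself measurable with respect to $\sigma(\zeta_{s},\,s\geq t)\vee\mathcal{N}_{P}$, since $\{\tau\leq s\}=\{\zeta_{s}=1\}$ $\mathbb{P}$-a.s.\ for every $s\geq t$. Hence $g(\tau\vee t)\,\mathbb{I}_{\{t<\tau\}}$ and, more generally, $g(\tau,\zeta_{u})\,\mathbb{I}_{\{t<\tau\}}$ are future-measurable functionals of $\zeta$, and the already established Markov property of $\zeta$ (in its standard extension from single times to the full future $\sigma$-field via a monotone class argument) gives
\[
\mathbb{E}\bigl[g(\tau,\zeta_{u})\,\mathbb{I}_{\{t<\tau\}}\,\big\vert\,\mathcal{F}_{t}^{\zeta,c}\bigr]=\mathbb{E}\bigl[g(\tau,\zeta_{u})\,\mathbb{I}_{\{t<\tau\}}\,\big\vert\,\zeta_{t}\bigr]
\]
directly, after which Propositions~\ref{propbayesestimatejusquatn} and~\ref{ncordbayesest} finish exactly as you describe. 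Your hands-on approach has the merit of being self-contained and of making explicit why the dependence on $\tau$ does not spoil the factorisation (it enters only through the outer integral $\int\ldots\,\mathbb{P}_{\tau}(dr)$, leaving the independence of $(\beta_{1},\ldots,\beta_{n})$ from $(\zeta_{t}^{r},\zeta_{u}^{r})$ intact); the paper's approach buys brevity by recognising that no new computation is needed once one sees $\tau$ as a functional of the future of $\zeta$.
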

	 \begin{proof} 
(i)\,	 	 	Obviously, we have
	 		$$\mathbb{E}[g(\tau)\vert \mathcal{F}_{t}^{\zeta,c}]=\mathbb{E}[g(\tau\wedge t)\mathbb{I}_{\{\tau\leqslant t\}}\vert \mathcal{F}_{t}^{\zeta,c}]+\mathbb{E}[g(\tau\vee t)\mathbb{I}_{\{ t<\tau \}}\vert \mathcal{F}_{t}^{\zeta,c}].$$
	 		Now since  $g(\tau\wedge t)\mathbb{I}_{\{\tau\leqslant t\}}$ is $ \mathcal{F}^{\zeta,c}_{t}$-measurable then, $\mathbb{P}$-a.s, one has
	 		\[
	 		\begin{array}{lll}
	 		\mathbb{E}[g(\tau\wedge t)\mathbb{I}_{\{\tau\leqslant t\}}\vert\mathcal{F}_{t}^{\zeta,c}] & = & g(\tau\wedge t)\mathbb{I}_{\{\tau\leqslant t\}}\\
	 		\\
	 		& = & g(\tau\wedge t)\mathbb{I}_{\{\zeta_{t}=1\}}.
	 		\end{array}
	 		\]
	 		On the other hand due to the facts that $g(\tau\vee t)\mathbb{I}_{\{t< \tau \}}$ is  $\sigma(\zeta_s, t \leq s \leq +\infty) \vee \mathcal{N}_P$-measurable and $\zeta$ is a Markov process with respect to its completed natural filtration we obtain $\mathbb{P}$-a.s.
	 		$$\mathbb{E}[g(\tau\vee t)\mathbb{I}_{\{ t<\tau \}}\vert \mathcal{F}_{t}^{\zeta,c}]=\mathbb{E}[g(\tau\vee t)\mathbb{I}_{\{ t<\tau \}}|\zeta_{t}],$$
	 		The result is deduced from \eqref{taucondxi}.

(ii)\,	 		 The equation \eqref{equationtauzeta_tgivenF} is an immediate consequence of (i). Concerning the equation \eqref{equationtauzetaugivenF} we use the same method which we used to prove (i).	
	 \end{proof}
	 \begin{remark} 
	 	The process $\zeta$ cannot be an homogeneous $\mathbb{F}^{\zeta^z}$-Markov process. Indeed, Proposition \ref{propbayesestimate} enables us to see that, for $A \in \mathcal{B}(\mathbb{R})$ and $t<u$, we have $\mathbb{P}$-a.s.,
	 		\[
\begin{array}{lll}
\mathbb{P}(\zeta_{u}\in A\vert\mathcal{F}_{t}^{\zeta}) & = & \mathbb{I}_{\{1\in A\}}\,\mathbb{I}_{\{\zeta_{t}=1\}}+\mathbb{I}_{\{1\in A\}}\dint_{(t,u]}\phi_{\zeta_{t}^{r}}(\zeta_{t})\,\mathbb{P}_{\tau}(dr)\,\mathbb{I}_{\{0<\zeta_{t}<1\}}\\
\\
 &  & +\dint_{(u,+\infty)}\int_{A}\mathbb{I}_{\{\zeta_{t}<y<1\}}\,\mathbb{P}_{\zeta_{u}^{r}\vert\zeta_{t}^{r}=x}(dy)\,\phi_{\zeta_{t}^{r}}(\zeta_{t})\,\mathbb{P}_{\tau}(dr)\,\mathbb{I}_{\{0<\zeta_{t}<1\}},
\end{array}
\]
	 		which is clear that it doesn't depend only on $u - t$.	 	
	 \end{remark}
	 \subsection{Markov property with respect to $\mathbb{F}^{\zeta,c}_+$}	\label{sectionmarkovpropertyfiltrationcontinue}
	 
	 We have established, in the previous section, the Markov property of $\zeta$ with respect to its completed natural filtration $\mathbb{F}^{\zeta,c}$. In this section we are interested in the the Markov property  of $\zeta$ with respect to $\mathbb{F}^{\zeta,c}_{+}$. It has an interesting consequence which is none other than the filtration $\mathbb{F}^{\zeta,c}$ satisfies the usual conditions of completeness and right-continuity. However, we need the following condition of  on the integrability of $\tau$.
\begin{hy}\label{hypconv} There exists a sufficiently small $\alpha >0 $ such that
	\begin{equation}
	\mathbb{E}\left(\tau^\alpha\right)<+\infty.\label{integrcond}
	\end{equation}
\end{hy}
The next theorem shows the Markov property of $\xi$ with respect to $\mathbb{F}^{\xi,c}_+$.
	 \begin{theorem}\label{thmmarkovpropertyfiltrationcontinue} 
	 	The  process $\zeta$ is a Markov process with respect to $\mathbb{F}^{\zeta,c}_{+}$.
	 \end{theorem}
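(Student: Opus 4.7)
The plan is to bootstrap the $\mathbb{F}^{\zeta,c}$-Markov property proven in the preceding theorem by sandwiching $\mathcal{F}^{\zeta,c}_{t^{+}}$ between the coarser $\sigma$-algebras $\mathcal{F}^{\zeta,c}_{t+\epsilon}$ and then letting $\epsilon\downarrow 0$. Fix $t\geq 0$, $h>0$, and a bounded measurable $f$. Since $\mathcal{F}^{\zeta,c}_{t^{+}}\subseteq\mathcal{F}^{\zeta,c}_{t+\epsilon}$ for every $\epsilon\in(0,h)$, the tower property combined with the $\mathbb{F}^{\zeta,c}$-Markov property gives
\[
\mathbb{E}[f(\zeta_{t+h})\vert\mathcal{F}^{\zeta,c}_{t^{+}}]=\mathbb{E}\bigl[\,\mathbb{E}[f(\zeta_{t+h})\vert\zeta_{t+\epsilon}]\,\big|\,\mathcal{F}^{\zeta,c}_{t^{+}}\bigr].
\]
Picking $\epsilon_{n}\downarrow 0$, the reverse martingale convergence theorem applied to the decreasing family $(\mathcal{F}^{\zeta,c}_{t+\epsilon_{n}})_{n}$, whose intersection is $\mathcal{F}^{\zeta,c}_{t^{+}}$, forces the outer conditional expectation to converge in $L^{1}$ and a.s.\ to $\mathbb{E}[f(\zeta_{t+h})\vert\mathcal{F}^{\zeta,c}_{t^{+}}]$. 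It then remains only to identify the limit of the inner random variable $\mathbb{E}[f(\zeta_{t+h})\vert\zeta_{t+\epsilon_{n}}]$ as $\mathbb{E}[f(\zeta_{t+h})\vert\zeta_{t}]$.

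To carry out the identification I would invoke formula~\eqref{equationtauzetaugivenF} with $g(r,y):=f(y)$, which represents $\mathbb{E}[f(\zeta_{t+h})\vert\zeta_{t+\epsilon}]$ as an explicit function $\Phi_{f}(t+\epsilon,t+h,\zeta_{t+\epsilon})$ built from the kernel $\phi_{\zeta_{\cdot}^{r}}(\cdot)$ of~\eqref{equationphi} and the beta transition~\eqref{equationtransitionlaw} of the fixed-length bridge. The c\`adl\`ag property of $\zeta$ gives $\zeta_{t+\epsilon_{n}}\to\zeta_{t}$ a.s., while $\mathbb{I}_{\{\zeta_{t+\epsilon}=1\}}=\mathbb{I}_{\{\tau\leq t+\epsilon\}}$ decreases a.s.\ to $\mathbb{I}_{\{\zeta_{t}=1\}}$. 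On the event $\{\zeta_{t}=1\}$ the identification is immediate since both sides reduce to $f(1)$. On the complementary event $\{0<\zeta_{t}<1\}$, the a.s.\ convergence $\Phi_{f}(t+\epsilon_{n},t+h,\zeta_{t+\epsilon_{n}})\to\Phi_{f}(t,t+h,\zeta_{t})=\mathbb{E}[f(\zeta_{t+h})\vert\zeta_{t}]$ reduces to dominated convergence inside each of the two $\mathbb{P}_{\tau}$-integrals, using joint continuity of the integrands in the time and space variables in the relevant ranges.

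The technical heart of the argument, and the reason for Assumption~\ref{hypconv}, will be supplying a $\mathbb{P}_{\tau}$-integrable dominating envelope for these integrands uniformly in small $\epsilon$. The ratio $\Gamma(r)/\Gamma(r-s)$ grows like $r^{s}$ for large $r$, while $(1-\zeta_{t+\epsilon})^{r}$ can be only mildly decaying when $\zeta_{t+\epsilon}$ stays close to $0$; the mild moment bound $\mathbb{E}[\tau^{\alpha}]<+\infty$ is precisely what controls the tail of $\mathbb{P}_{\tau}$ against these factors uniformly on a small right-neighborhood of $t$. Once this domination step is in place, matching the two limits yields $\mathbb{E}[f(\zeta_{t+h})\vert\mathcal{F}^{\zeta,c}_{t^{+}}]=\mathbb{E}[f(\zeta_{t+h})\vert\zeta_{t}]$, which is the required Markov property with respect to $\mathbb{F}^{\zeta,c}_{+}$.
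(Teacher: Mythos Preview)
Your overall strategy coincides with the paper's: use reverse martingale convergence along $t_n\downarrow t$ together with the already-established $\mathbb{F}^{\zeta,c}$-Markov property to reduce everything to the a.s.\ convergence $\mathbb{E}[f(\zeta_u)\mid\zeta_{t_n}]\to\mathbb{E}[f(\zeta_u)\mid\zeta_t]$, and then verify the latter via the explicit Bayes representation and dominated convergence inside the $\mathbb{P}_\tau$-integrals. For $t>0$ this is exactly what the paper does.

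The gap is at $t=0$. Your dichotomy $\{\zeta_t=1\}$ versus $\{0<\zeta_t<1\}$ omits the a.s.\ event $\{\zeta_0=0\}$, and formula~\eqref{equationtauzetaugivenF} is stated only for $t>0$. More importantly, your account of where Assumption~\ref{hypconv} enters is slightly off: for $t>0$, on $\{t<\tau\}$ one has $\zeta_{t+\epsilon}\geq\zeta_t>0$, so $(1-\zeta_{t+\epsilon})^r\leq(1-\zeta_t)^r$ decays exponentially in $r$ and dominates the polynomial growth $\Gamma(r)/\Gamma(r-t-\epsilon)\sim r^{t+\epsilon}$ without any moment hypothesis on $\tau$; the paper uses a compact-set bound here and never invokes~\eqref{integrcond}. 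The moment condition is needed only at $t=0$, where $\zeta_{t_n}\to 0$ destroys the exponential decay and leaves an envelope $\Gamma(r)/\Gamma(r-t_1)\sim r^{t_1}$, whose $\mathbb{P}_\tau$-integrability is precisely $\mathbb{E}[\tau^{t_1}]<\infty$ for $t_1<\alpha$. The paper treats $t=0$ in two further steps: first under the extra hypothesis $\mathbb{P}(\tau>\varepsilon)=1$ (so the integrals stay away from $r=0$), and then, for general $\tau$, by a localization $\tau_\varepsilon=\tau\vee\varepsilon$ that reduces the question to the $\mathbb{P}$-triviality of $\mathcal{F}^{\zeta}_{0^+}$. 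Your sketch would need this separate treatment of $t=0$ to be complete.
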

	 \begin{proof}
	 	It is sufficient to prove that for any $0\leq t<u$ and any function bounded continuous $g$ we have
	 	\begin{equation}
	 	\mathbb{E}[g(\zeta_{u})|\mathcal{F}^{\zeta,c}_{t+}]=\mathbb{E}[g(\zeta_{u})|\zeta_{t}],~~ \mathbb{P}-a.s.\label{equationmarkovrct>0}
	 	\end{equation}
	 	Let $(t_{n})_{n\in \mathbb{N}}$
	 	be a decreasing sequence of strictly positive real numbers converging to $t$: that is $0 \leq t <...< t_{n+1} < t_{n}<...<t_{1} < u $, $t_{n} \searrow t$ as $n \longrightarrow +\infty $. Since $g$ is bounded and $\mathcal{F}_{t+}^{\zeta,c}=\underset{n}{\cap}\mathcal{F}_{t_{n}}^{\zeta,c}$ then,  $\mathbb{P}$-a.s., we have
	 	\begin{equation}
	 	\mathbb{E}[g(\zeta_{u})|\mathcal{F}^{\zeta,c}_{t+}]=\lim\limits_{n\longmapsto +\infty}\mathbb{E}[g(\zeta_{u})|\mathcal{F}^{\zeta,c}_{t_n}].
	 	\end{equation}
	 	It follows from the Markov property of $\zeta$ with respect to $\mathbb{F}^{\zeta,c}$ that 	
	 	\begin{equation}
	 	\mathbb{E}[g(\zeta_{u})|\mathcal{F}^{\zeta,c}_{t+}]=\lim\limits_{n\longmapsto +\infty}\mathbb{E}[g(\zeta_{u})|{\zeta}_{t_n}],~~\mathbb{P}-a.s.
	 	\end{equation}
	 	It remains to prove that
	 	\begin{equation}
	 	\lim\limits_{n \longrightarrow +\infty} \mathbb{E}[g(\zeta_{u})|\zeta_{t_{n}}]=\mathbb{E}[g(\zeta_{u})|\zeta_{t}],~~\mathbb{P}-a.s.\label{equationcontdemarkovrc}
	 	\end{equation}
	 	The proof is splitted into two parts. In the first one we show the statment \eqref{equationmarkovrct>0}  for $t > 0$, while in the second part we consider the case $t = 0$.
	 	
	 	Let $t>0$. We begin by noticing that from Proposition \ref{propbayesestimate}, $\mathbb{P}$-a.s., we have
	 	
	 	\begin{align}
	 	\mathbb{E}[g(\zeta_u)|\zeta_{t_{n}}]=& g(1) \left( \mathbb{I}_{\{\zeta_{t_{n}}=1\}} 	+ \dint_{(t_{n},u]}\,\phi_{\zeta_{t_{n}}^{r}}(\zeta_{t_{n}})\mathbb{P}_{\tau}(dr)\mathbb{I}_{\{0<\zeta_{t_{n}}<1\}} \right)\nonumber  \\
	 	\nonumber \\	& +\dint_{(u,+\infty)}\,K_{t_{n},u}(r,\zeta_{t_{n}})\phi_{\zeta_{t_{n}}^{r}}(\zeta_{t_{n}})\,\mathbb{P}_{\tau}(dr)\,\mathbb{I}_{\{0<\zeta_{t_{n}}<1\}}
 \nonumber \\ 
	 	\nonumber \\ & = g(1)\left( \mathbb{I}_{\{\tau \leq  t_{n}\}}+\dint_{(t_{n},u]}\phi_{\zeta_{t_{n}}^{r}}(\zeta_{t_{n}})\mathbb{P}_{\tau}(dr)\mathbb{I}_{\{t_{n}<\tau \}} \right) \nonumber \\ 
	 	\nonumber \\ & +\dint_{(u,+\infty)}\,K_{t_{n},u}(r,\zeta_{t_{n}})\phi_{\zeta_{t_{n}}^{r}}(\zeta_{t_{n}})\,\mathbb{P}_{\tau}(dr)\,\mathbb{I}_{\{t_{n}<\tau\}}.
 \nonumber
	 	\end{align}
	 		 Where the function $K_{t,u}(r,x)$ is defined on $\mathbb{R}$ by for $0<t<u<r$
	 		 \begin{eqnarray}
	 		 K_{t,u}(r,x)&:=&\mathbb{E}[g(\zeta_{u}^{r})|\zeta_{t}^{r}=x]\nonumber \\&=&\dint_{\mathbb{R}}g(y)\mathbb{I}_{\{x<y<1\}}\mathbb{P}_{\zeta_{u}^{r}|\zeta_{t}^{r}=x}(dy).
 \label{Hcondlaw}
	 		 \end{eqnarray}	
	 	Since $\lim\limits_{n \longrightarrow +\infty}\, \mathbb{I}_{\{t_{n}<\tau \}}=\mathbb{I}_{\{t<\tau \}} $ then assertion \eqref{equationcontdemarkovrc} will be established if we show, $\mathbb{P}$-a.s on $\left\{ t<\tau\right\} $, that
	 	\begin{equation}
	 	\lim\limits _{n\longrightarrow+\infty}\dint_{(t_{n},u]}\,\phi_{\zeta_{t_{n}}^{r}}(\zeta_{t_{n}})\mathbb{P}_{\tau}(dr)=\dint_{(t,u]}\,\phi_{\zeta_{t}^{r}}(\zeta_{t})\mathbb{P}_{\tau}(dr),\label{equationlimitphitnxi-phitxi}
	 	\end{equation}
	 	and
	 	\begin{equation}
	 	\lim\limits _{n\longrightarrow+\infty}\dint_{(u,+\infty)}\,K_{t_{n},u}(r,\zeta_{t_{n}})\phi_{\zeta_{t_{n}}^{r}}(\zeta_{t_{n}})\,\mathbb{P}_{\tau}(dr)=\dint_{(u,+\infty)}\,K_{t,u}(r,\zeta_{t})\phi_{\zeta_{t}^{r}}(\zeta_{t})\,\mathbb{P}_{\tau}(dr). \label{equationlimitphitnxiG-phitxiG}
	 	\end{equation}
	 	We start by proving	 assertion \eqref{equationlimitphitnxi-phitxi}. The integral on the left-hand side of \eqref{equationlimitphitnxi-phitxi} can be rewritten as
	 	\begin{align*}\dint_{(t_{n},u]}\phi_{\zeta_{t_{n}}^{r}}(\zeta_{t_{n}})\,\mathbb{P}_{\tau}(dr) & =\frac{\dint_{(t_{n},u]}\varphi_{\zeta_{t_{n}}^{r}}(\zeta_{t_{n}})\,\mathbb{P}_{\tau}(dr)}{\dint_{(t_{n},+\infty)}\varphi_{\zeta_{t_{n}}^{s}}(\zeta_{t_{n}})\mathbb{P}_{\tau}(ds)}\,\mathbb{I}_{\left\lbrace 0<\zeta_{t_{n}}<1\right\rbrace}\\
 & =\dfrac{\dint_{(t_{n},u]}\bigg(1-\zeta_{t_{n}}\bigg)^{r}\dfrac{\Gamma(r)}{\Gamma(r-t_{n})}\mathbb{P}_{\tau}(dr)}{\dint_{(t_{n},+\infty)}\bigg(1-\zeta_{t_{n}}\bigg)^{s}\dfrac{\Gamma(s)}{\Gamma(s-t_{n})}\mathbb{P}_{\tau}(ds)}\,\mathbb{I}_{\left\lbrace 0<\zeta_{t_{n}}<1\right\rbrace}.
\end{align*}
	 	First let us remark that the function 
	 	\[
	 	(t,r,x)\longrightarrow (1-x)^{r}\dfrac{\Gamma(r)}{\Gamma(r-t)}
	 	\]
	 	defined on $\ensuremath{\{(t,r)\in (0,+\infty)^2, t< r\}\times (0, 1)}$
	 	is continuous. Using the facts that $\zeta_{t_{n}}$ is decreasing to $\zeta_t$ and $\mathbb{P}\left[\zeta_t=0\right]=0$ then, $\mathbb{P}$-a.s on $\left\{ t<\tau\right\} $,
	 	we have 
	 	\begin{equation}
	 	\underset{n\rightarrow+\infty}{\lim}\,(1-\zeta_{t_{n}})^{r}\dfrac{\Gamma(r)}{\Gamma(r-t_{n})}\mathbb{I}_{\{t_{n}<r\}}\,\mathbb{I}_{\left\lbrace 0<\zeta_{t_{n}}<1\right\rbrace}=(1-\zeta_{t})^{r}\dfrac{\Gamma(r)}{\Gamma(r-t)}\mathbb{I}_{\{t<r\}}\,\mathbb{I}_{\left\lbrace 0<\zeta_{t}<1\right\rbrace}.
\label{phixiconv}
	 	\end{equation}
	 	On the other hand since the function $x\longmapsto\left(1-x\right)^{r}$ is decreasing on $(0,1)$ for all $r>0$ and 
\begin{equation}\label{equivcond}
0\leq\dfrac{\Gamma(r)}{\Gamma(r-t)}=r^{t}\left[1-\dfrac{t(t+1)}{2r}+O\left(\dfrac{1}{r^{2}}\right)\right],
\end{equation} 	
for large enough $r$, see \cite{AB}, p. 257, 6.1.46, then for any compact subset $\mathcal{K}$ of $(0,+\infty)\times (0, 1)$
	 	it yields 
	 	\[
	 	\underset{(t,x)\in\mathcal{K}}{\sup}\,(1-x)^{r}\dfrac{\Gamma(r)}{\Gamma(r-t)}\mathbb{I}_{\{t<r\}}<+\infty.
	 	\]
	 	Hence, $\mathbb{P}$-a.s on $\left\{ t<\tau\right\} $, we have
	 	\begin{equation}
	 	\underset{n\in\mathbb{N}}{\sup}\,\bigg(1-\zeta_{t_{n}}\bigg)^{r}\dfrac{\Gamma(r)}{\Gamma(r-t_{n})}\mathbb{I}_{\{t_{n}<r\}}\,\mathbb{I}_{\left\lbrace 0<\zeta_{t_{n}}<1\right\rbrace}<+\infty.
\label{varphiconvergence}
	 	\end{equation}
	 	We conclude assertion \eqref{equationlimitphitnxi-phitxi} from the Lebesgue dominated convergence theorem.\\
	 	Now let us prove \eqref{equationlimitphitnxiG-phitxiG}. Recall that the
	 	function $K_{t_{n},u}(r,\zeta_{t_{n}})$ is given by
\[	 	\begin{array}{lll}	 
K_{t_{n},u}(r,\zeta_{t_{n}}) & = & \dint_{\mathbb{R}}g(y)\mathbb{I}_{\{x<y<1\}}\mathbb{P}_{\zeta_{u}^{r}|\zeta_{t_{n}}^{r}=x}(dy)\vert_{x=\zeta_{t_{n}}}\\
\\
 & = & \dfrac{\Gamma(r-t_{n})}{\Gamma(u-t_{n})\Gamma(r-u)}\,\dint_{\mathbb{R}}\,g(y)\,\dfrac{(y-\zeta_{t_{n}})^{u-t-1}(1-y)^{r-u-1}}{(1-\zeta_{t_{n}})^{r-t-1}}\,\mathbb{I}_{\{\zeta_{t_{n}}<y<1\}}\,dy.
	\end{array}\]
	 	Since $g$ is bounded we deduce that $K_{t_{n},u}(r,\zeta_{t_{n}})$ is bounded. Moreover we obtain from the weak convergence that  
	 	\[
	 	\underset{n\rightarrow+\infty}{\lim}\, K_{t_{n},u}(r,\zeta_{t_{n}})=K_{t,u}(r,\zeta_{t}),
	 	\] 
	 	$\mathbb{P}$-a.s on $\left\{ t<\tau\right\} $.
	 	Combining the fact that $K_{t_{n},u}(r,\zeta_{t_{n}})$ is bounded, \eqref{phixiconv} and \eqref{varphiconvergence} assertion \eqref{equationlimitphitnxiG-phitxiG} is then derived from the Lebesgue dominated convergence theorem.
	 	
	 	Next, we investigate the second part of the proof, that is the case $t = 0$. It will be carried out in two steps. In the first one we assume that there exists $\varepsilon > 0$ such that 
	 	\begin{equation}\label{tauminor}
	 	\mathbb{P}(\tau > \varepsilon)=1.
	 	\end{equation} 
	 	As in the first part, it is sufficient to verify that
	 	\begin{equation}\label{equationlimitmarkovstep2t=0}
	 	\lim\limits_{n \longrightarrow +\infty} \mathbb{E}[g(\zeta_{u})\vert {\zeta}_{t_{n}}]=\mathbb{E}[g(\zeta_{u})\vert \zeta_{0}],~~\mathbb{P}-a.s.
	 	\end{equation}
	 	Without loss of generality we assume
	 	$t_{n}< \alpha \wedge \varepsilon $ for all $n\in \mathbb{N}$. It is easy to see that under condition \eqref{tauminor}, 	$\mathbb{E}[g(\zeta_{u})\vert\zeta_{t_{n}}]$ takes the form
	 	\[
	 	\mathbb{E}[g(\zeta_{u})\vert\zeta_{t_{n}}]=g(1)\dint_{(\varepsilon,u]}\phi_{\zeta_{t_{n}}^{r}}(\zeta_{t_{n}})\mathbb{P}_{\tau}(dr)+\dint_{(u,+\infty)}\,K_{t_{n},u}(r,\zeta_{t_{n}})\phi_{\zeta_{t_{n}}^{r}}(\zeta_{t_{n}})\,\mathbb{P}_{\tau}(dr).
	 	\]
	 	On the other hand we have
	 	\[
	 	\mathbb{E}[g(\zeta_{u})\vert\zeta_{0}]=\mathbb{E}[g(\zeta_{u})]=g(1)F(u)+\int_{(u,+\infty)}\int_{\mathbb{R}}\,g(y)\varphi_{\zeta_{t}^{r}}(y)\,dy\,\mathbb{P}_{\tau}(dr).
	 	\]
	 	Then in order to show \eqref{equationlimitmarkovstep2t=0} it is sufficient to prove, $\mathbb{P}$-a.s, the following
	 	\begin{equation}
	 	\lim\limits_{n\longrightarrow +\infty}\;\dint_{(\varepsilon,u]}\phi_{\zeta_{t_{n}}^{r}}(\zeta_{t_{n}})\mathbb{P}_{\tau}(dr)=F(u),
	 	\label{conas1}
	 	\end{equation}
	 	and 
	 	\begin{equation}
	 	\lim\limits _{n\longrightarrow+\infty}\,\dint\limits _{(u,+\infty)}\,K_{t_{n},u}(r,\zeta_{t_{n}})\phi_{\zeta_{t_{n}}^{r}}(\zeta_{t_{n}})\,\mathbb{P}_{\tau}(dr)=\int_{(u,+\infty)}\int_{\mathbb{R}}\,g(y)\varphi_{\zeta_{t}^{r}}(y)\,dy\,\mathbb{P}_{\tau}(dr).\label{conas2}
	 	\end{equation}
First, for $r>\varepsilon$, we have
\[
\underset{n\rightarrow+\infty}{\lim}\,(1-\zeta_{t_{n}})^{r}\dfrac{\Gamma(r)}{\Gamma(r-t_{n})}\mathbb{I}_{\{t_{n}<r\}}\mathbb{I}_{\left\{ 0<\zeta_{t_{n}}<1\right\} }=1.
\]
Since the gamma function is increasing on $[2,\infty )$ then, for $r\geq 2+t_{1}$, we obtain 
	 	\begin{equation}
	 	\underset{n\in\mathbb{N}}{\sup}\,\bigg(1-\zeta_{t_{n}}\bigg)^{r}\dfrac{\Gamma(r)}{\Gamma(r-t_{n})}<\dfrac{\Gamma(r)}{\Gamma(r-t_{1})}.
\label{varphiconvergence2}
	 	\end{equation}
It follows from \eqref{integrcond} and \eqref{equivcond} that the fonction $r\longmapsto\dfrac{\Gamma(r)}{\Gamma(r-t_{1})}$ is $\mathbb{P}_{\tau}$-integrable on  $(\varepsilon,+\infty)$. Hence \eqref{conas1} follows from a simple application of the Lebesgue dominated convergence theorem. In the same way as in the first case ($t>0$) we obtain from the weak convergence that 
	 	\[
	 	\underset{n\rightarrow+\infty}{\lim}\,K_{t_{n},u}(r,\zeta_{t_{n}})=\dfrac{\Gamma(r)}{\Gamma(u)\Gamma(r-u)}\,\dint_{\mathbb{R}}\,g(y)\,y^{u-1}(1-y)^{r-u-1}\,\mathbb{I}_{\{0<y<1\}}\,dy,
	 	\] 	
	 	then also \eqref{conas2} follows from a simple application of the Lebesgue dominated convergence theorem.
	 	Finally, we have to consider the general case, that is $\mathbb{P}(\tau >0)=1$. In order to prove the Markov property of $\zeta$ with respect to $\mathbb{F}^{\zeta,c}_{+}$ at $t = 0$ it is sufficient to show that $\mathcal{F}_{0+}^{\zeta,c}$ is $\mathbb{P}$-trivial. This amounts to prove that $\mathcal{F}_{0+}^{\zeta}$ is $\mathbb{P}$-trivial since $\mathcal{F}_{0+}^{\zeta,c}=\mathcal{F}_{0+}^{\zeta} \vee \mathcal{N}_{P}$. To do so, let $\varepsilon > 0$ be fixed and consider the stopping time $\tau_{\varepsilon}=\tau \vee \varepsilon$. We define the process $\zeta_{t}^{\tau_{\varepsilon}}$ by $$\left\lbrace\zeta_{t}^{\tau_{\varepsilon}};\,t\geq 0\right\rbrace:=\left\lbrace\zeta_{t}^{r} \vert_{r=\tau \vee \varepsilon};\,t\geq 0\right\rbrace.$$  
	 	The first remark is that the sets $(\tau_{\varepsilon}>\varepsilon)=(\tau>\varepsilon)$ are equal  and therefore the following equality of processes holds $$\zeta_{\cdot}^{\tau_{\varepsilon}}\mathbb{I}_{(\tau>\varepsilon)}=\zeta_{\cdot}\;\mathbb{I}_{(\tau>\varepsilon)}.$$
	 	Then for each $A\in \mathcal{F}_{0+}^{\zeta}$ there exists $B\in \mathcal{F}_{0+}^{\zeta^{\tau_{\varepsilon}}}$ such that  
	 	$$A\cap(\tau>\varepsilon)=B\cap (\tau>\varepsilon).$$
	 	As $\mathbb{P}(\tau_{\varepsilon}>\varepsilon/2)=1$ then  according to the previous case we have that $\mathcal{F}_{0+}^{\zeta^{\tau_{\varepsilon}}}$ is $\mathbb{P}$-trivial. That is $\mathbb{P}(B)=0$ or $1$. Consequently we obtain 
	 	$$\mathbb{P}(A\cap(\tau>\varepsilon))=0\text{\,\,or\,\,}\mathbb{P}(A\cap(\tau>\varepsilon))=\mathbb{P}(\tau>\varepsilon).$$
	 	Now if $\mathbb{P}(A)>0$, then there exists $\varepsilon > 0$ such that $\mathbb{P}(A\cap \{\tau> \varepsilon\})>0$. Therefore for all $0<\varepsilon'\leq \varepsilon$ we have 
	 	$$\mathbb{P}(A\cap(\tau>\varepsilon'))=\mathbb{P}(\tau>\varepsilon').$$ 
	 	Passing to the limit as $\varepsilon'$ goes to $ 0$ yields $\mathbb{P}(A\cap(\tau>0))=\mathbb{P}(\tau>0)=1$. It follows that $\mathbb{P}(A)=1$, which ends the proof.
	 \end{proof}	
	 \begin{corollary}\label{corFpsatisfiesusualcondition}
	 	The  filtration $\mathbb{F}^{\zeta,c}$ satisfies the usual conditions of rightcontinuity and completeness.
	 \end{corollary}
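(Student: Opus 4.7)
The plan is to use Theorem \ref{thmmarkovpropertyfiltrationcontinue} (the Markov property of $\zeta$ with respect to $\mathbb{F}^{\zeta,c}_{+}$) to collapse $\mathcal{F}^{\zeta,c}_{t^+}$ onto $\mathcal{F}^{\zeta,c}_t$. Completeness is free by construction, so everything reduces to showing $\mathcal{F}^{\zeta,c}_{t^+}\subseteq \mathcal{F}^{\zeta,c}_t$ for every $t\geq 0$.

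First I would establish the key reduction: for every $t\geq 0$ and every bounded $Y$ that is measurable with respect to $\mathcal{F}^{\zeta}_\infty:=\sigma(\zeta_s,\, s\geq 0)$, one has
\[
\mathbb{E}[Y\mid \mathcal{F}^{\zeta,c}_{t^+}]=\mathbb{E}[Y\mid \mathcal{F}^{\zeta,c}_t],\qquad \mathbb{P}\text{-a.s.}
\]
I would prove this by a monotone class argument on the generating family of products $Y=\prod_{i=1}^{n} f_i(\zeta_{t_i})$, where $f_i$ are bounded Borel and the times $t_i$ are arbitrary. Splitting the product at $t$ into factors with $t_i\leq t$ (which are already $\mathcal{F}^{\zeta,c}_t$-measurable and can be pulled out of both conditional expectations) reduces the problem to the case $t<t_1<\cdots<t_n$. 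A backward induction on $n$, combining the tower property with the Markov property $\mathbb{E}[f(\zeta_{t_{k+1}})\mid \mathcal{F}^{\zeta,c}_{t_k^+}]=\mathbb{E}[f(\zeta_{t_{k+1}})\mid \zeta_{t_k}]$ from Theorem \ref{thmmarkovpropertyfiltrationcontinue}, then yields the identity above. The monotone class theorem upgrades it from products to all bounded $\mathcal{F}^{\zeta}_\infty$-measurable $Y$.

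The corollary then follows immediately. Take any $A\in \mathcal{F}^{\zeta,c}_{t^+}$; since $\mathcal{F}^{\zeta,c}_{t^+}\subseteq \mathcal{F}^{\zeta,c}_\infty=\mathcal{F}^{\zeta}_\infty\vee \mathcal{N}_P$, we may choose a representative $B\in \mathcal{F}^{\zeta}_\infty$ with $\mathbb{P}(A\triangle B)=0$ and apply the identity to $Y=\mathbb{I}_B$:
\[
\mathbb{I}_A=\mathbb{E}[\mathbb{I}_A\mid \mathcal{F}^{\zeta,c}_{t^+}]=\mathbb{E}[\mathbb{I}_B\mid \mathcal{F}^{\zeta,c}_{t^+}]=\mathbb{E}[\mathbb{I}_B\mid \mathcal{F}^{\zeta,c}_t]\quad \mathbb{P}\text{-a.s.}
\]
Thus $\mathbb{I}_A$ admits an $\mathcal{F}^{\zeta,c}_t$-measurable version, and by completeness of $\mathcal{F}^{\zeta,c}_t$ we conclude $A\in \mathcal{F}^{\zeta,c}_t$. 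This proves $\mathcal{F}^{\zeta,c}_{t^+}=\mathcal{F}^{\zeta,c}_t$, i.e.\ right-continuity.

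The main (and essentially only) obstacle is the rigorous upgrade from the one-step Markov property to the product form used in the monotone class step: I would have to be careful that the conditioning at each stage is taken with respect to $\mathbb{F}^{\zeta,c}_{+}$ and not merely $\mathbb{F}^{\zeta,c}$, so that the inductive step actually uses Theorem \ref{thmmarkovpropertyfiltrationcontinue} and not just the weaker $\mathbb{F}^{\zeta,c}$-Markov property. Everything else is a standard application of the monotone class theorem and the tower property.
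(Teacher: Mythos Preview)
Your argument is correct and is precisely the standard proof that the paper invokes by citing Blumenthal--Getoor \cite{BG}, Ch.~I, Proposition~(8.12): once the Markov property holds with respect to $\mathbb{F}^{\zeta,c}_{+}$, the monotone-class/backward-induction argument you outline forces $\mathbb{E}[Y\mid \mathcal{F}^{\zeta,c}_{t^+}]=\mathbb{E}[Y\mid \mathcal{F}^{\zeta,c}_{t}]$ for every bounded $\mathcal{F}^{\zeta}_\infty$-measurable $Y$, and right-continuity follows. You have simply unpacked the cited reference; there is no substantive difference in approach.
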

	 \begin{proof}
	 	See, e.g., [\cite{BG}, Ch. I, Proposition (8.12)]
	 \end{proof}
	 
	 \subsection{Semimartingale Decomposition of $\zeta$}\label{sectionsemimartingale}
Our purpose is to derive the semimartingale property of $\zeta$ with respect to its own filtration $\mathbb{F}^{\zeta,c}$. Firstly,  we obtain from the representation \eqref{bridgemartrep} that 
	 
	\begin{equation}
				\zeta_{t}=\widehat{N}_{t}+\dint_{0}^{t}\, Z_{s}\, ds, \quad t\geq 0,\label{RBrep}
					\end{equation}
					where the processes $\widehat{N}$ and $Z$ are defined as follows:
					$$\widehat{N}_{t}(\omega):=\widehat{N}_t^r(\omega)\vert_{r=\tau(\omega)},$$		and
					$$Z_{t}=\dfrac{1-\zeta_{t}}{\tau-t} \, \mathbb{I}_{\{t< \tau \}},$$
for $(t,\omega) \in \mathbb{R}_{+} \times \Omega$. Now let us consider the filtration 
\begin{equation}
\mathbb{H}=\left(\mathcal{H}_t:=\mathcal{F}^{\zeta,c}_{t}\vee \sigma(\tau),\,\, t\geq 0\right),
\end{equation}		
which is equal to the initial enlargement of the filtration $\mathbb{F}^{\zeta,c}$ by the $\sigma$-algebra $\sigma(\tau)$.
	Since the processes $\zeta$ and $Z$ are $\mathbb{H}$-adaped it follows from equation \eqref{RBrep} that $\widehat{N}$ is $\mathbb{H}$-adapted. Moreover, $\tau $ is a stopping time with respect to $\mathbb{H}$. The next proposition will play a very important role in forthcoming developments, since it shows the semimartingale property of $\zeta$ with respect to $\mathcal{H}_{t}$.
		 \begin{proposition}\label{propintYdecomsemiinL1}
	 	\begin{enumerate}
	 		\item[(i)] 	We have
	 		$$ \mathbb{E}\left[\int_{0}^{t}\,|Z_{s}|\,ds\right]<+\infty, \forall t\geq 0. $$
	 		\item[(ii)] The process $\widehat{ N}=(\widehat{ N}_{t},t\geq 0)$ defined  by
	 		\begin{equation}
	 		\widehat{N}_{t}=\zeta_{t}-\int_{0}^{t}\,Z_{s}ds,\,\, t\geq 0, \label{equationGmartingale}
	 		\end{equation}
	 		is a $\mathcal{H}_{t}$-martingale stopped at $\tau$.
	 	\end{enumerate}
	 \end{proposition}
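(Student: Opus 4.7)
The plan is to treat the two parts separately, both via a conditioning-on-$\tau$ argument that exploits Assumption~\ref{hyindependent}.

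For part (i), I would condition on $\tau=r$ and use that, by independence of $\tau$ and $\gamma$, the conditional law of $\zeta_s$ given $\tau=r$ is that of $\zeta_s^r\sim \beta(s,r-s)$ for $s<r$. Since $\mathbb{E}[1-\zeta_s^r]=(r-s)/r$, Tonelli gives
\[
\mathbb{E}\left[\int_0^t |Z_s|\,ds\right]
=\int_{(0,+\infty)}\int_0^{t\wedge r}\dfrac{\mathbb{E}[1-\zeta_s^r]}{r-s}\,ds\,\mathbb{P}_\tau(dr)
=\int_{(0,+\infty)}\dfrac{t\wedge r}{r}\,\mathbb{P}_\tau(dr)\le 1,
\]
which settles (i).

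For (ii), $\mathbb{H}$-adaptedness of $\widehat N$ is immediate from~\eqref{RBrep}, and integrability follows from $|\zeta_t|\le 1$ together with (i). The stopped-at-$\tau$ property is inherited from Remark~\ref{stoppedmart}: since $\widehat N_t^r=M_{t\wedge r}^r/\gamma_r$ depends on $t$ only through $t\wedge r$, one has $\widehat N_t=\widehat N_t^r\vert_{r=\tau}=\widehat N_{t\wedge\tau}$. The core step is the martingale identity $\mathbb{E}[\widehat N_t\vert\mathcal{H}_s]=\widehat N_s$ for $s\le t$. I would verify it on a $\pi$-system generating $\mathcal{H}_s$ consisting of sets $A\cap\{\tau\in C\}$ with $A=\{\zeta_{s_1}\in B_1,\ldots,\zeta_{s_n}\in B_n\}$, $s_1,\ldots,s_n\le s$, $B_i\in \mathcal{B}(\mathbb{R})$, $C\in\mathcal{B}(\mathbb{R}_+)$, and then extend by the monotone class theorem, null sets causing no difficulty. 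Writing $A^r:=\{\zeta_{s_1}^r\in B_1,\ldots,\zeta_{s_n}^r\in B_n\}\in \mathcal{F}^{\zeta^r}_s$, Assumption~\ref{hyindependent} allows to factor
\[
\mathbb{E}\left[(\widehat N_t-\widehat N_s)\mathbb{I}_A\,\mathbb{I}_{\{\tau\in C\}}\right]
=\int_C \mathbb{E}\left[(\widehat N_t^r-\widehat N_s^r)\mathbb{I}_{A^r}\right]\,\mathbb{P}_\tau(dr),
\]
and the inner expectation vanishes by Remark~\ref{stoppedmart}, since $(\widehat N^r_t)_{t\ge 0}$ is an $\mathbb{F}^{\zeta^r}$-martingale and $A^r\in\mathcal{F}^{\zeta^r}_s$.

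The main obstacle will be making the freezing step fully rigorous: formalizing the identity ``conditional on $\tau=r$, the event $A\in\mathcal{F}^{\zeta,c}_s$ coincides with $A^r\in\mathcal{F}^{\zeta^r,c}_s$''. I plan to handle it by first working on the generating $\pi$-system above, where the identification is manifest, then invoking Corollary~\ref{cormesurable} to justify the joint measurability needed for Fubini against $\mathbb{P}_\tau$, and finally passing to completions at the very end.
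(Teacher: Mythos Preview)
Your proposal is correct and follows essentially the same route as the paper: for (i) you condition on $\tau=r$, use $\mathbb{E}[\zeta_s^r]=s/r$ and Tonelli to get the bound $\le 1$; for (ii) you test the increment $\widehat{N}_t-\widehat{N}_s$ against generators of $\mathcal{H}_s$, disintegrate via independence to reduce to the $\mathbb{F}^{\zeta^r}$-martingale property of $\widehat{N}^r$ from Remark~\ref{stoppedmart}, and conclude by a monotone class argument. The paper does exactly this (using bounded Borel test functions $g(\zeta_{t_1},\dots,\zeta_{t_n},\tau)$ rather than indicators, and splitting the $\mathbb{P}_\tau$-integral into the regions $r<t$, $t\le r<t+h$, $r\ge t+h$, which is cosmetic since $\widehat{N}^r$ is already an $\mathbb{F}^{\zeta^r}$-martingale for all $t\ge 0$); your ``freezing'' concern is handled precisely by restricting to the generating $\pi$-system, as you anticipate.
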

	 \begin{proof}
	(i) We first note that $Z$ is a nonnegative process. Since, for $s\leq r$, $\zeta_s^{r}$ has a beta distribution $\beta (s,r-s)$ then $\mathbb{E}\left(\zeta_s^{r}\right)=s/r$.  So, we can
	 		see, for any $t\geq 0$, that 
	 		\begin{align*}
	 		\mathbb{E}\left[\int_{0}^{t} Z_{s}\, ds\right]&=\int_{0}^{+\infty}\int_{0}^{t\wedge r}\,\frac{1-\mathbb{E}[\zeta_s^{r}]}{r-s} \,ds \,\mathbb{P}_{\tau}(dr)\\
	 		&=\int_{0}^{+\infty} \,\int_{0}^{t\wedge r} \,\frac{1}{r} \,ds \,\mathbb{P}_{\tau}(dr)\leq 1.
	 		\end{align*}
	 			(ii) By assertion (i), the process $\left(Z_{t},t\geq 0\right)$ is
	 		integrable with respect to the Lebesgue measure, hence $\widehat{N}$ is well-defined.
	 		It is clear that the process $\widehat{N}$ is $\mathbb{H}$-adapted and $\widehat{N}_{t}=\widehat{N}_{\tau}$, $\mathbb{P}$-a.s, on the set $\{t\geq \tau \}$. Now since $(\widehat{N}_t^r, t\geq 0)$ is a $\mathbb{F}^{\zeta^{r}}$-martingale stopped at $r$ we obtain, for any $0<t_{1}<t_{2}<...<t_{n}=t$, $n\in \mathbb{N}^*$, $h \geq 0$ and $g$ a bounded Borel function, that
\begin{align*}\mathbb{E}\left[(\widehat{N}_{t+h}-\widehat{N}_{t})g(\zeta_{t_{1}},\ldots,\zeta_{t_{n}},\tau)\right] & =\int_{(0,+\infty)}\mathbb{E}[(\widehat{N}_{t+h}^{r}-\widehat{N}_{t}^{r})g(\zeta_{t_{1}}^{r},\ldots,\zeta_{t_{n}}^{r},r)]\mathbb{P}_{\tau}(dr)\\
\\
 & =\int_{(0,t)}\mathbb{E}[(\widehat{N}_{t+h}^{r}-\widehat{N}_{t}^{r})g(\zeta_{t_{1}}^{r},\ldots,\zeta_{t_{n}}^{r},r)]\mathbb{P}_{\tau}(dr)\\
 & +\int_{[t,t+h)}\mathbb{E}[(\widehat{N}_{t+h}^{r}-\widehat{N}_{t}^{r})g(\zeta_{t_{1}}^{r},\ldots,\zeta_{t_{n}}^{r},r)]\mathbb{P}_{\tau}(dr)\\
 & +\int_{[t+h,+\infty)}\mathbb{E}[(\widehat{N}_{t+h}^{r}-\widehat{N}_{t}^{r})g(\zeta_{t_{1}}^{r},\ldots,\zeta_{t_{n}}^{r},r)]\mathbb{P}_{\tau}(dr)\\
\\
 & =\int_{(0,t)}\mathbb{E}[(\widehat{N}_{r}^{r}-\widehat{N}_{r}^{r})g(\zeta_{t_{1}}^{r},\ldots,\zeta_{t_{n}}^{r},r)]\mathbb{P}_{\tau}(dr)\\
 & +\int_{[t,t+h)}\mathbb{E}[(\widehat{N}_{r}^{r}-\widehat{N}_{t}^{r})g(\zeta_{t_{1}}^{r},\ldots,\zeta_{t_{n}}^{r},r)]\mathbb{P}_{\tau}(dr)\\
 & +\int_{[t+h,+\infty)}\mathbb{E}[(\widehat{N}_{t+h}^{r}-\widehat{N}_{t}^{r})g(\zeta_{t_{1}}^{r},\ldots,\zeta_{t_{n}}^{r},r)]\mathbb{P}_{\tau}(dr)=0.
\end{align*}	The desired result follows by a standard monotone class argument. This completes the proof.
	 \end{proof}

Therefore, it follows from Stricker's Theorem \cite{ST} that $\zeta$ is a semimartingale relative to its natural filtration $\mathbb{F}^{\zeta,c}$. A natural question is: what is the explicit form of its canonical decomposition? That is the problem we want to discuss. The method consists in applying the stochastic fltering theory.
	 \begin{theorem}\label{thmdecompositionsemimartingale}
	 	 The canonical decomposition of $\zeta$ in its natural filtration $\mathbb{F}^{\zeta,c}$ is given by
	 	\begin{equation}
	 	\zeta_{t}=\widetilde{N}_t+\int_{0}^{t} \left(1-\zeta_{s}\right)\dint_{(s,+\infty)}\dfrac{1}{r-s}\phi_{\zeta_{s}^{r}}(\zeta_{s})\,\mathbb{P}_{\tau}(dr)\,\mathbb{I}_{\{0<\zeta_{s}<1\}}ds,\label{equationdecompositionsemi}
	 	\end{equation}
	 	where $(\widetilde{N}_t, t\geq0)$ is an $\mathbb{F}^{\zeta,c}$-martingale stopped at $\tau$.
	 \end{theorem}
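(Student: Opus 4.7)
The strategy is to project the $\mathbb{H}$-decomposition of Proposition \ref{propintYdecomsemiinL1} onto the smaller filtration $\mathbb{F}^{\zeta,c}$ using stochastic filtering. By that proposition, we have $\zeta_t=\widehat{N}_t+\int_{0}^{t}Z_s\,ds$ with $Z_s=\frac{1-\zeta_s}{\tau-s}\mathbb{I}_{\{s<\tau\}}$ and $\widehat{N}$ an $\mathbb{H}$-martingale stopped at $\tau$. Since $\mathbb{F}^{\zeta,c}\subset\mathbb{H}$, Stricker's theorem already gives us that $\zeta$ is an $\mathbb{F}^{\zeta,c}$-semimartingale, so it remains only to identify the drift. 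The natural candidate for the $\mathbb{F}^{\zeta,c}$-compensator of $\int_{0}^{t}Z_s\,ds$ is
\[
A_t:=\int_{0}^{t}\mathbb{E}\!\left[Z_s\mid\mathcal{F}_s^{\zeta,c}\right]ds.
\]

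The first key step is to compute this conditional expectation explicitly. Factoring out the $\mathcal{F}_s^{\zeta,c}$-measurable factor $1-\zeta_s$, it suffices to compute $\mathbb{E}\!\left[\frac{\mathbb{I}_{\{s<\tau\}}}{\tau-s}\,\big|\,\mathcal{F}_s^{\zeta,c}\right]$. Apply Proposition \ref{propbayesestimate}(i) with $g(r)=\frac{1}{r-s}\mathbb{I}_{\{r>s\}}$; since $g$ is not bounded, approximate it by the bounded truncations $g_n(r)=\frac{\mathbb{I}_{\{r>s+1/n\}}}{r-s}$ and pass to the limit by monotone convergence. Noting that $g(\tau\wedge s)=0$ on $\{\tau\leq s\}=\{\zeta_s=1\}$, one obtains
\[
\mathbb{E}\!\left[Z_s\mid\mathcal{F}_s^{\zeta,c}\right]=(1-\zeta_s)\int_{(s,+\infty)}\frac{1}{r-s}\,\phi_{\zeta_s^{r}}(\zeta_s)\,\mathbb{P}_{\tau}(dr)\,\mathbb{I}_{\{0<\zeta_s<1\}},
\]
which matches the drift in \eqref{equationdecompositionsemi} and is jointly measurable in $(s,\omega)$ through its explicit expression in $\zeta_s$.

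The second step is to verify that $\widetilde{N}_t:=\zeta_t-A_t$ is an $\mathbb{F}^{\zeta,c}$-martingale. For $0\leq s<t$ and $B\in\mathcal{F}_s^{\zeta,c}$, since $B\in\mathcal{F}_u^{\zeta,c}$ for every $u\geq s$, Fubini (justified by Proposition \ref{propintYdecomsemiinL1}(i)) yields
\[
\mathbb{E}\!\left[(\zeta_t-\zeta_s)\mathbb{I}_B\right]=\int_s^t\mathbb{E}[Z_u\mathbb{I}_B]\,du=\int_s^t\mathbb{E}\!\left[\mathbb{E}[Z_u\mid\mathcal{F}_u^{\zeta,c}]\mathbb{I}_B\right]du=\mathbb{E}\!\left[(A_t-A_s)\mathbb{I}_B\right],
\]
which gives the martingale property, and uniqueness of the canonical decomposition of an $\mathbb{F}^{\zeta,c}$-special semimartingale pins down the representation \eqref{equationdecompositionsemi}.

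Finally, the fact that $\widetilde{N}$ is stopped at $\tau$ is essentially automatic: on $\{s\geq\tau\}$ we have $\zeta_s=1$ almost surely, so the indicator $\mathbb{I}_{\{0<\zeta_s<1\}}$ kills the integrand of $A$ after $\tau$; combined with $\zeta_t=\zeta_\tau=1$ on $\{t\geq\tau\}$, this yields $\widetilde{N}_t=\widetilde{N}_{t\wedge\tau}$. The main technical obstacle is handling the singularity of $1/(\tau-s)$ near $s=\tau$ when invoking the Bayes formula; this is the reason for the truncation-and-monotone-convergence argument in the first step, but integrability over $[0,t]\times\Omega$ is already guaranteed by Proposition \ref{propintYdecomsemiinL1}(i), so no additional assumption on $\tau$ is required.
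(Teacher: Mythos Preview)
Your proof is correct and follows the same filtering strategy as the paper: start from the $\mathbb{H}$-decomposition of Proposition~\ref{propintYdecomsemiinL1}, project onto $\mathbb{F}^{\zeta,c}$, and identify $\mathbb{E}[Z_s\mid\mathcal{F}_s^{\zeta,c}]$ via Proposition~\ref{propbayesestimate}. The only substantive difference is that the paper obtains the $\mathbb{F}^{\zeta,c}$-decomposition by quoting a standard innovation/filtering result (Br\'emaud \cite{BR}, T1 p.~87, or Kallianpur \cite{KA}, Theorem~8.1.1), whereas you reprove that step by hand with a Fubini argument; your route is therefore more self-contained, at the cost of one extra computation. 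One expository gap worth fixing: your equality $\mathbb{E}[(\zeta_t-\zeta_s)\mathbb{I}_B]=\int_s^t\mathbb{E}[Z_u\mathbb{I}_B]\,du$ silently uses $\mathbb{E}[(\widehat{N}_t-\widehat{N}_s)\mathbb{I}_B]=0$, which holds because $\widehat{N}$ is an $\mathbb{H}$-martingale and $B\in\mathcal{F}_s^{\zeta,c}\subset\mathcal{H}_s$; say this explicitly. Your truncation-and-monotone-convergence treatment of the unbounded $g(r)=1/(r-s)$ is in fact more careful than the paper's direct appeal to \eqref{equationtaugivenF}, which is stated only for bounded $g$.
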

	 \begin{proof}
Let us start by recalling that $\tau $ is a stopping time with respect to $\mathbb{F}^{\zeta,c}$. A well known result of filtering theory, \cite{BR} (T1, p. 87) (or Theorem 8.1.1 and Remark 8.1.1 \cite{KA} for more general setting) tells us that the decomposition of $\zeta$ in its natural filtration $\mathbb{F}^{\zeta,c}$ is given by
\begin{equation}
	 	\zeta_{t}=\widetilde{N}_t+\int_{0}^{t} \mathbb{E}\left(Z_s\vert \mathcal{F}_{s}^{\zeta,c}\right) ds,
	 	\end{equation}
	 	where $(\widetilde{N}_t, t\geq0)$ is an $\mathbb{F}^{\zeta,c}$-martingale stopped at $\tau$.
Therefore, we have only to compute the conditional expectation of $Z_s$ relative to $\mathcal{F}_s^{\zeta,c}$. Indeed, using  \eqref{equationtaugivenF} we have 
\[
\begin{array}{ll}
\mathbb{E}\left(Z_{s}\vert\mathcal{F}_{s}^{\zeta,c}\right) & =\mathbb{E}\left(\dfrac{1-\zeta_{s}}{\tau-s}\,\mathbb{I}_{\{s<\tau\}}\vert\mathcal{F}_{s}^{\zeta,c}\right)=\left(1-\zeta_{s}\right)\mathbb{E}\left(\dfrac{1}{\tau-s}\,\mathbb{I}_{\{s<\tau\}}\vert\mathcal{F}_{s}^{\zeta,c}\right)\\
\\
 & =\left(1-\zeta_{s}\right)\dint_{(s,+\infty)}\dfrac{1}{r-s}\phi_{\zeta_{s}^{r}}(\zeta_{s})\,\mathbb{P}_{\tau}(dr)\,\mathbb{I}_{\{0<\zeta_{s}<1\}}.
\end{array}
\]
	Hence we derive the canonical decomposition \eqref{equationdecompositionsemi} of $\zeta$ as a semimartingale in its
own filtration $\mathbb{F}^{\zeta,c}$. \end{proof}
\begin{remark}
The results of the paper can be straightforwardly extended to a large
class of gamma subordinator $(\gamma_{t}^{(\eta,\kappa)},\,t\geq0)$,
$\eta,\kappa>0$, with Lévy measure 
\[
\nu(dx)=\dfrac{\kappa}{x}\,\exp(-\eta x)\,\mathbb{I}_{\left(0,\infty\right)}(x)\,dx
\]
and whose law at time $t$ is the gamma distribution with density
\[
f_{\gamma_{t}^{(\eta,\kappa)}}(x)=\dfrac{\eta^{\kappa t}\,x^{\kappa t-1}\,\exp(-\eta x)}{\Gamma(\kappa t)}\mathbb{I}_{\left(0,\infty\right)}(x).
\]
The Lévy-Khintchine representation is given by
\[
\mathbb{E}(\exp(-\lambda\gamma_{t}^{(\eta,\kappa)}))=(1+\dfrac{\lambda}{\eta})^{-\kappa t}.
\]
On the other hand, they can be also easily extended to the gamma bridges
of length $r$, starting at $0$, with an arbitrary ending point $a>0$
\[
\zeta_{t}^{r}:=a\dfrac{\gamma_{t\wedge r}^{(m)}}{\gamma_{r}^{(m)}},~t\geq0.
\]
 For the sake of simplicity, we have therefore considered only the case
$\eta=\kappa=a=1$ without loss of generality.

\end{remark}

\end{document}